\newtheorem{theorem}{\textbf{Theorem}}[section]
\newtheorem{proposition}[theorem]{\textbf{Proposition}}
\newtheorem{definition}[theorem]{\textbf{Definition}}
\newtheorem{lemma}[theorem]{\textbf{Lemma}}
\newtheorem{corollary}[theorem]{\textbf{Corollary}}
\newtheorem{remark}[theorem]{\textbf{Remark}}
\newtheorem{example}[theorem]{\textbf{Example}}
\def\pencil{base point free pencil trick}
\def\ag{\`a}
\def\p{\mathbb P^2}
\def\n{\mathbb P^{N}}
\def\v{V_{n,g}}
\def\s{\Sigma^n_{k,d}}
\def\d{\delta}
\def\cp{C^{\prime}}
\def\g{\Gamma}
\def\fn{\mathcal N_\phi}
\def\k{\mathcal K_{\phi}}
\def\bnm{\mu_{o,C}}
\def\o{\mathcal O}
\def\m{\mathcal M}
\def\pa{\binom{n-1}{2}}
\def\ge{[\Gamma]}
\def\cp{C^{\prime}}
\def\g{\Gamma}
\def\fn{\mathcal N_\phi}
\def\k{\mathcal K_{\phi}}
\def\o{\mathcal O}
\begin{document}
\title[Number of moduli of irreducible families...]
{Number of moduli of irreducible families of plane curves with
 nodes and cusps.}

\author{ Concettina Galati }

\address{Dipartimento di Matematica, Universit\ag\, degli Studi
della Calabria, via P. Bucci, cubo 30B, Arcavacata di Rende (CS)}

\email{galati@mat.unical.it}

\thanks{}

\subjclass{14H15; 14H10; 14B05}

\keywords{families of plane curves, number of moduli, nodes and cusps.}

\date{06 September 2005}

\dedicatory{}

\commby{}


\begin{abstract}
Let $\s\subset\mathbb P^{\frac{n(n+3)}{2}}$ be the family of
irreducible plane curves
of degree $n$ with $d$ nodes and $k$ cusps as singularities. Let $\Sigma\subset\s$
be an irreducible component. We consider the natural rational map
$$
\Pi_{\Sigma}:\Sigma\dashrightarrow \mathcal M_g,
$$
from $\Sigma$ to the moduli space of curves of genus $g=\pa-d-k$.
We define the \textit{number of moduli of $\Sigma$} as the
dimension $dim(\Pi_{\Sigma}(\Sigma))$.
If $\Sigma$ has the expected dimension equal to $3n+g-1-k$, then
\begin{equation}\label{dis}
dim(\Pi_{\Sigma}(\Sigma))\leq min(dim(\m_g), dim(\m_g)+\rho-k),
\end{equation}
where $\rho:=\rho(2,g,n)=3n-2g-6$ is the Brill-Neother number of
the linear series of degree $n$ and dimension $2$ on a smooth
curve of genus $g$. We say that $\Sigma$ has the expected number
of moduli if the equality holds in \eqref{dis}. In this paper
we construct examples
of families of irreducible plane curves with nodes and cusps as singularities
having expected number of moduli and with non-positive Brill-Noether number.  
\end{abstract}

\maketitle
\section[On the number of moduli...]{Introduction}\label{introduction}
In this paper we compute the number of moduli of certain 
families of irreducible plane curves with nodes and 
cusps as singularities. 
Let $\s\subset\mathbb P (H^0(\p,\mathcal{O}_{\p}(n))):=\n$, with
$N=\frac{n(n+3)}{2}$,  be the closure, in the Zariski's topology, of
the locally closed set of reduced and irreducible plane curves of
degree $n$ with $k$ cusps and $d$ nodes. Let $\Sigma\subset\s$ 
be an irreducible component
of the variety $\s$. We denote by $\Sigma_0$ the open set of
$\Sigma$ of points $\ge\in\Sigma$ such that $\Sigma$ is smooth at
$\ge$ and such that $\ge$ corresponds to a reduced and irreducible
plane curve of degree $n$ with $d$ nodes, $k$ cusps and no further
singularities. Since the tautological family $ \mathcal
S_0\to\Sigma_0$, parametrized by $\Sigma_0$, is an equigeneric
family of curves, by normalizing the total space, we get a family
\begin{displaymath}
\xymatrix{\mathcal S^\prime_0 \ar[dr]\ar[r] &
\mathcal S_0\ar[d]\ar@{^{(}->}[r]&\p\times\Sigma_0\ar[dl]\\
&\Sigma_0&}
\end{displaymath}
of smooth curves of genus $g=\binom{n-1}{2}-k-d$. Because of
the functorial properties of the moduli space $\m_g$ of smooth
curves of genus $g$, we get a regular map $\Sigma_0\to\m_g$,
sending every point $[\g]\in\Sigma_0$ to the isomorphism class of
the normalization of the plane curve $\g$ corresponding to the
point $[\g]$. This map extends to a rational map
$$
\Pi_{\Sigma}:\Sigma\dashrightarrow\m_g.
$$

We say that $\Pi_{\Sigma}$ is the \textit{moduli map of} $\Sigma$ and we set
$$
\textit{number of moduli of $\Sigma$}:= \dim (\Pi_{\Sigma}(\Sigma)).
$$
Notice that, when $\s$ is reducible, two different irreducible
components of $\s$ can have different number of moduli. We say
that $\Sigma$ has \textit{general moduli} if $\Pi_{\Sigma}$ is
dominant. Otherwise, we say that $\Sigma$ has \textit{special
moduli}. 
\begin{definition}\label{ubnm}
When $\Sigma$ has the expected dimension equal to $3n+g-1-k$ and $g\geq 2$, we say that $\Sigma$ has the
expected number of moduli if
$$
\dim (\Pi_{\Sigma}(\Sigma))= \min(\dim(\m_g), \dim(\m_g)+\rho-k),
$$
where $\rho:=\rho(2,g,n)=3n-2g-6$ is the number of Brill-Noether
of the linear series of degree $n$ and dimension $2$ on a smooth
curve of genus $g$.
\end{definition}
As we shall see in the next section, when $g\geq 2$ and when $\Sigma$ 
has the expected dimension equal to $3n+g-1-k$,  
the number of moduli of $\Sigma$ is at most equal to the expected one.
This happens in particular if $k<3n$. If $k\geq 3n$, in general we have not 
an upper-bound for the dimension of $\Sigma$ 
and we cannot provide an upper bound for the number of
moduli of $\Sigma$, (see lemma \ref{lemmaubnm} and remark \ref{remarkubnm}). 
Moreover, by classical
Brill-Neother theory when $\rho$ is positive and by a well know
result of Sernesi when $\rho\leq 0$ (see \cite{ser}), we have that
$\Sigma^n_{0,d}$, (which is irreducible by \cite{h1}), has the expected number
of moduli for every $d\leq \pa$. When $k>0$ there are known results giving 
sufficient conditions for the existence of irreducible components $\Sigma$
of $\s$ with general moduli, (see propositions \ref{n>2g-1+2k} and \ref{5.43}
and corollary \ref{5.431}). In this article we construct examples of 
families of irreducible plane  curves with nodes and cusps with finite 
and expected number of moduli.
A large part of this paper is obtained 
working out the main ideas and
techniques that Sernesi uses in \cite{ser}. 

In section \ref{S-E} we introduce the varieties $\s$ and we recall their
main properties. In section \ref{krnm} we discuss on definition \ref{ubnm}
and we summarize known results on the number of moduli
of families of irreducible plane curves with nodes and cusps. In theorem \ref{gknnc}
we prove the existence of plane curves with nodes and cusps as
singularities whose singular points are in sufficiently general
position to impose independent linear conditions to a linear
system of plane curves of a certain degree. This result is related to 
the moduli problem by lemma \ref{gknpc}, remark \ref{kimplies}
and proposition \ref{bnm}, where we find sufficient conditions in order that 
an irreducible component $\Sigma\subset\s$ has the expected number of moduli.
If $\Sigma$ verifies the hypotheses of proposition  \ref{bnm}, 
then the Brill-Neother number $\rho$ is not positive and $\Sigma$ has finite number of moduli. Moreover, by lemma \ref{rank}
and corollary \ref{semicon}, for every $k^\prime\leq k$ and
$d^\prime\leq d+k-k^\prime$, there is at least an irreducible
component $\Sigma^\prime\subset\Sigma_{k^\prime,d^\prime}^n$, such
that $\Sigma\subset\Sigma^\prime$ and the general element
$[D]\in\Sigma^\prime$ corresponds to a  plane curve $D$ verifying 
hypotheses of proposition \ref{bnm} and so having the expected 
number of moduli. Finally, the main result of
this paper is contained in theorem \ref{moduli}, where, by using
induction on the degree $n$ and on the genus $g$ of the general
curve of the family, we construct examples of families of irreducible plane curves with 
nodes and cusps verifying the hypotheses of proposition \ref{bnm}.
In particular, we prove that, if $k\leq 6$
and $\rho\leq 0$, then $\s$ has at least an irreducible component which is not
empty and which has the expected number of moduli. 
This result may be improved and examples of families of curves showing
that the condition $k\leq 6$ is not sharp are given in remark \ref{remarkmoduli}. 
Notice that the previous theorem provides only examples of families 
of plane curves with nodes and cusps with expected number of moduli,
when $\rho$ is not positive. When the number of cusps $k$ is very small,
we expect it is possible to prove the existence of irreducible components of
$\s$ with expected number of moduli, for every value of $\rho$.
For example, from a result of Eisenbud and Harris, it follows 
that $\Sigma_{1,d}^n$, (which is irreducible by \cite{k2}), 
has general moduli if $\rho\geq 2$, (see corollary \ref{5.431}). 
In theorem \ref{rho=1}, by using induction on $n$ we find that  
$\Sigma_{1,d}^n$ has general moduli also when $\rho =1$. By recalling that, 
by theorem \ref{moduli}, $\Sigma_{1,d}^n$ has expected number of moduli 
when $\rho\leq 0$, we conclude that
$\Sigma_{1,d}^n$ has the expected number of moduli for every $\rho$ or, equivalently,
for every $d\leq\pa-1$.
We still don't know
examples of irreducible components of $\s$ 
having number of moduli smaller that the expected.

\section[On the number of moduli...]{Preliminaries}\label{preliminaries}
\subsection{On Severi-Enriques varieties}\label{S-E}We shall denote by $\n=
\mathbb P^{\frac{n(n+3)}{2}}$ the Hilbert scheme of plane curves of degree $n$, by $\ge\in\n$ the point parametrizing a plane curve $\g\subset\p$ and by 
$\s\subset\n$ the 
closure, in the Zariski topology, of the locally closed set parametrizing
reduced and irreducible plane curves of degree $n$ with $d$ nodes and $k$ cusps as singularities. These varieties have been introduced 
at the beginning of the last century by Severi and Enriques. In particular, the case $k=0$
has been studied first by Severi and for this reason
the varieties $\Sigma^{n}_{0,d}$ are usually called Severi varieties,
while for $k>0$ the varieties $\Sigma^n_{k,d}$ are called Severi-Enriques varieties.
We recall that every irreducible component $\Sigma$ of $\Sigma^{n}_{k,d}$ has dimension at least equal to 
$$N-d-2k=3n+g-1-k,$$
where $g=\pa-k-d$. 
When the equality holds \textit{we say that $\Sigma$ has expected dimension.} 
Moreover, it is well known that \textit{if $k<3n$ then
every irreducible component $\Sigma$ of $\Sigma^{n}_{k,d}$ has expected dimension,}
(see for example \cite{w2} or \cite{z2}).
On the contrary, when $k\geq 3n$, there exist examples of irreducible components of $\s$
having dimension greater than the expected, (see \cite{z2}).  
Moreover, we recall that $\Sigma^{n}_{0,d}$ is not empty 
for every $d\leq \pa$ and it contains in its closure all points
parameterizing irreducible plane curves of degree $n$ and genus $g=\pa-d$,
(see \cite{z1}, \cite{z2} and \cite{ac1}). Often, we shall denote $\Sigma^{n}_{0,d}$
by $\v$. While the proof of the existence of $\v$ 
is quite elementary and it is due to Severi, the irreducibility 
of $\v$ remained an open problem for a long time and it has been proved by
Harris only in 1986. Later, by using the same techniques of Harris, Kang has 
proved the irreducibility of $\s$ with $k\leq 3$, see \cite{h1} and \cite{k2}.
However, in general, $\s$ is reducible and there exist values of $n$, $d$ and $k$
such that $\s$ is empty, (see \cite{z2}, \cite{sh2}, \cite{sh1}, \cite{gk} 
or chapter 2 of \cite{tesi} and related references). Finally, we recall
that, if $\Sigma\subset\s$ is a non-empty irreducible component of the expected 
dimension equal to $3n+g-1-k$, then, for every $k^\prime\leq k$ and $d^\prime\leq d+k-k^\prime$, there exists a non-empty irreducible component $\Sigma^\prime\subset\Sigma^n_{k^\prime,d^\prime}$ such that $\Sigma\subset\Sigma^\prime$.
This happens in particular if $k<3n$. 
More precisely, it is true that, if $\g\subset\p$ is a reduced
(possibly reducible) plane curve of degree $n$ with $k<3n$ cusps at points
$q_1,\dots,q_k$, nodes at points
$p_1,\dots,p_d$ and no further singularities, then, chosen arbitrarily $k_1$ cusps, say
$q_1,\dots,q_{k_1}$ among the $k$ cusps of $\g$, $k_2$ cusps 
$q_{k_1+1},\dots,q_{k_2}$ among
$q_{k_1+1},\dots,q_k$ and $d_1$ nodes $p_1,\dots,p_{d_1}$
among the nodes of
$\g$, there exists a family of reduced plane curves $\mathcal D\to B\subset \n$
of degree $n$, whose special fibre is $\mathcal D_0=\g$ and whose general fibre 
$\mathcal D_t=D$ has a node 
in a neighborhood of every marked node of $\g$, a cusp in a neighborhood of
each point $q_1,\dots,q_{k_1}$,
a node in a neighborhood of each point $q_{k_1+1},\dots,q_{k_2}$ and no further 
singularities, (see \cite{z2}, corollary 6.3 of \cite{gk} 
or lemma 3.17 of chapter 2 of \cite{tesi}). To save space,
we shall say that \textit{the family $\mathcal D\to B$ is obtained from
$\g$ by preserving the singularities $q_1,\dots,q_{k_1}$ and $p_1,\dots,p_{d_1}$,
by deforming in a node each cusp $q_{k_1+1},\dots,q_{k_2}$ and by smoothing the other
singularities.}   
\subsection{Known results on the number of moduli of $\s$}\label{krnm}
In order to explain the definition \ref{ubnm}, we need to recall
some basics of Brill-Noether theory. Given a smooth curve $C$ of
genus $g$, the set $G^2_n(C)$ of linear series $g^2_n$ on $C$
of dimension $2$ and degree $n$, is a projective variety which
verifies the following properties:
\begin{enumerate}
	\item $G^2_n(C)$  is
  not empty of dimension at least $\rho$, if
  $\rho (2, n, g) = 3n-2g-6\geq 0$, (see theorem V.1.1 and proposition
  IV.4.1 of \cite{acgh}).
  \item Let $g^2_n$ be a given
linear series, let $H\in g^2_n$ be a divisor and let $W\subset
H^0(C,H)$ be the three dimensional vector space corresponding to $g^2_n$.
Denoting by $\omega_C=\o_C(K_C)$ the canonical sheaf of $C$ and by
$$
\mu_{o,C}:W\otimes H^0(C,\omega_C(-H))\to H^0(C,\omega_C)
$$
the natural multiplication map, also called the \textit{Brill-Noether map} 
of the pair $(C,W)$, we have that the dimension of the tangent space to
$G^2_n(C)$ at the point $[g^2_n]$,corresponding to $g^2_n$, is
equal to
$$
\dim (T_{[g^2_n]}G^2_n(C))=\rho+\dim (\ker(\mu_{0,C})),
$$
(see \cite{ac2} or proposition IV.4.1 of \cite{acgh} for a proof). 
\item Moreover, if $C$ is a
curve with general  moduli  (i.e. if $[C]$ varies in an open set of
$\m_g$), the variety $G^2_n(C)$ is empty if $\rho<0$, it consists
of a finite number of points if $\rho=0$ and it is reduced,
irreducible, smooth and not empty variety of dimension exactly
$\rho$, when $\rho\geq 1$, (see theorem V.1.5 and theorem V.1.6 of \cite{acgh}).
In the latter case, the general $g^2_n$ on $C$
defines a local embedding on $C$ and it maps $C$ to $\p$ as a
nodal curve, (see theorem 3.1 of \cite{ac1} or lemma 3.43 of \cite{h2}).\label{g2n2}
\end{enumerate}
From \eqref{g2n2}, we deduce that, the Severi variety
$\Sigma^n_{0,d}=V_{n,g}$ of irreducible plane curves of genus
$g=\pa-d$, has general moduli when $\rho\geq
0$ and it has special moduli when $\rho<0$. When $\rho < 0$, and
then $g\geq 3$, by definition \ref{ubnm}, we expect that the image 
of $\v$ into $\m_g$ has
codimension exactly $-\rho$. Equivalently, recalling that, in this
case, $$\dim (\v)=3n+g-1=3g-3+\rho+8=\dim(\m_g)+\rho+\dim(Aut(\p)),$$
we expect that on the smooth curve $C$, obtained by normalizing the
plane curve corresponding to the general element of $\v$, there
is only a finite number of $g^2_n$ mapping $C$ to the plane as a
nodal curve. This is a well known result proved by Sernesi in
\cite{ser}.
\begin{theorem}[Sernesi, \cite{ser}]
The Severi variety $V_{n,g}=\Sigma^{n}_{0,d}$ of irreducible plane curves of degree
$n$ and genus $g=\pa-d$ has number of moduli equal to $$
\min(\dim(\m_g),\dim(\m_g)+\rho).$$
\end{theorem}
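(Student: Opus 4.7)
The plan is to split into cases according to the sign of $\rho$. If $\rho \geq 0$, property~(3) of the Brill-Noether facts recalled in the excerpt applies directly: a general $[C]\in\m_g$ carries a $g^2_n$ mapping $C$ birationally onto a nodal plane curve of degree $n$; hence $[C]\in\Pi_{V_{n,g}}(V_{n,g})$, so $\Pi_{V_{n,g}}$ is dominant and the target value $\min(\dim\m_g,\dim\m_g+\rho)=\dim\m_g$ is reached. From now on assume $\rho<0$. The upper bound is immediate: the action of $\mathrm{PGL}_3$ on $V_{n,g}$ has generic orbits of dimension $8$ contained in the fibers of $\Pi_{V_{n,g}}$, so
\[
\dim\Pi_{V_{n,g}}(V_{n,g})\leq\dim V_{n,g}-8=3n+g-9=\dim\m_g+\rho.
\]

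For the reverse inequality I would compute the image of the differential of $\Pi_{V_{n,g}}$ at a general $[\g]\in V_{n,g}$. Let $\phi:C\to\g\subset\p$ be the composition of the normalization with the inclusion, $L:=\phi^*\o_{\p}(1)$ (of degree $n$), and $W\subset H^0(C,L)$ the three-dimensional subspace inducing $\phi$. Standard deformation theory of nodal plane curves identifies $T_{[\g]}V_{n,g}\cong H^0(C,\fn)$. From the normal-sheaf sequence $0\to T_C\to\phi^*T_\p\to\fn\to 0$ and the vanishing $H^0(T_C)=0$ (since $g\geq 2$), the tangent to $\Pi_{V_{n,g}}(V_{n,g})$ at $[C]$ is the image of the coboundary $H^0(\fn)\to H^1(T_C)$ and has dimension $3n+g-1-\dim H^0(\phi^*T_\p)$, so the desired value $\dim\m_g+\rho$ is achieved precisely when $\dim H^0(\phi^*T_\p)=8$. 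Pulling back the Euler sequence on $\p$ gives $0\to\o_C\to W\otimes L\to\phi^*T_\p\to 0$, and taking cohomology while identifying, via Serre duality, the coboundary $H^1(\o_C)\to W\otimes H^1(L)$ with the transpose of the Brill-Noether map $\bnm:W\otimes H^0(\omega_C(-H))\to H^0(\omega_C)$ yields
\[
\dim H^0(\phi^*T_\p)=3\,h^0(L)-1+\dim\mathrm{coker}(\bnm).
\]
A direct count shows $\dim\ker(\bnm)-\dim\mathrm{coker}(\bnm)=-\rho$, so by property~(2) of the excerpt the equality $\dim H^0(\phi^*T_\p)=8$ is equivalent to the $g^2_n$ being complete ($h^0(L)=3$) and to $[g^2_n]$ being an isolated point of $G^2_n(C)$, i.e.\ to $\bnm$ being surjective.

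The main obstacle is thus to verify, for the $g^2_n$ carried by the normalization of a general $[\g]\in V_{n,g}$ when $\rho<0$, both that $h^0(L)=3$ and that $\bnm$ is surjective. By semicontinuity of $\dim\ker$ and $\dim\mathrm{coker}$ in families it suffices to exhibit one element $[\g_0]$ lying in (or in the closure of) $V_{n,g}$, reached if necessary by an admissible specialization of the type described in Section~\ref{S-E}, whose normalization $C_0$ together with the induced linear series satisfies both properties, and then to check them directly on $C_0$. I would produce such a test curve by degenerating to a suitable reducible or rational nodal plane curve of degree $n$ whose normalization and $g^2_n$ allow $\ker(\bnm)$ to be computed explicitly---typically via the base-point-free pencil trick applied to a pencil of lines in $\p$, or through a Koszul/syzygy analysis of $W$. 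Carrying out this Brill-Noether calculation on the specialization, and verifying that $\g_0$ genuinely lies in $\overline{V_{n,g}}$, is the delicate step and constitutes the heart of Sernesi's argument in \cite{ser}.
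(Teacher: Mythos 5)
Your outline is correct and is essentially the route the paper takes: the paper records this statement as a cited theorem of Sernesi, and its own Proposition \ref{bnm} (whose $k=0$ case it explicitly attributes to \cite{ser}, section 4) carries out exactly your differential computation --- identifying the corank of $d\Pi$ at a general point with $\dim(\ker(\mu_{o,C}))$ via the normal-sheaf and Euler sequences --- while Theorem \ref{gknnc}, Lemma \ref{ag} and Theorem \ref{moduli} supply, in the more general cuspidal setting, the degeneration-plus-semicontinuity verification of $h^0(C,\o_C(1))=3$ and of the surjectivity of $\mu_{o,C}$ that you correctly single out as the heart of the matter. The one point worth making explicit is that, since $V_{n,g}$ is irreducible (\cite{h1}), exhibiting a single good specialization does suffice for the whole variety rather than only for one component.
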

What can we say about the number of moduli of an irreducible
component $\Sigma$ of $\s$, when $k>0$? 
In this case we need to distinguish the two cases $k<3n$ 
and $k\geq 3n$. In the first case we have the following result.
\begin{lemma}\label{lemmaubnm}
For every not empty irreducible component $\Sigma$ of $\s$,
with $k<3n$ and $g=\pa-k-d\geq 2$, the number of moduli of $\Sigma$ 
is at most equal to 
$$
\min (\dim(\m_g),\,\dim(\m_g)+\rho-k),
$$
where $\rho=3n-2g-6$ is the Brill-Neother number of moduli of
linear series of dimension $2$ and degree $n$ on a smooth curve of
genus $g$. 
\end{lemma}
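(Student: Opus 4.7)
The plan is to combine two upper bounds on $\dim\Pi_\Sigma(\Sigma)$ and take their minimum. The first bound, $\dim\Pi_\Sigma(\Sigma)\leq\dim\m_g$, is tautological since the image of $\Pi_\Sigma$ is contained in $\m_g$ by construction. For the second bound, $\dim\Pi_\Sigma(\Sigma)\leq\dim\m_g+\rho-k$, I would exhibit a lower bound of $8$ on the dimension of the generic fiber of $\Pi_\Sigma$ and then invoke the equality $\dim\Sigma=3n+g-1-k$, which holds by the result recalled in Section \ref{S-E} because $k<3n$.

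To produce the fiber estimate, observe that the group $PGL(3)$ acts on $\n$ preserving the singularity types of plane curves, hence preserving $\s$ set-theoretically. Since $PGL(3)$ is irreducible, the action map $PGL(3)\times\Sigma\to\n$ has irreducible image, and this image equals $\Sigma$ because it contains $\Sigma$ and is contained in $\s$. Thus $\Sigma$ is $PGL(3)$-invariant. For a general $\ge\in\Sigma_0$, let $\phi:C\to\g\subset\p$ be the normalization. The stabilizer of $\ge$ in $PGL(3)$ coincides with the group of projective automorphisms of $\g$ and embeds into $Aut(C)$: any projective transformation that induces the identity on $C$ acts trivially on the non-degenerate curve $\g$ and so must equal the identity of $\n$. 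Since $g\geq 2$, the group $Aut(C)$ is finite, so the $PGL(3)$-orbit of $\ge$ has dimension exactly $8$. All members of this orbit have normalization isomorphic to $C$, hence they all map to the same point $[C]\in\m_g$, and the orbit is contained in the fiber of $\Pi_\Sigma$ over $\Pi_\Sigma(\ge)$. Applying this at a general $\ge$ (which corresponds to a general point of the image), the generic fiber of the rational map $\Pi_\Sigma$ has dimension at least $8$.

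Combining these observations,
\[
\dim\Pi_\Sigma(\Sigma)\leq\dim\Sigma-8=3n+g-9-k=(3g-3)+(3n-2g-6)-k=\dim\m_g+\rho-k,
\]
and together with the trivial bound $\dim\Pi_\Sigma(\Sigma)\leq\dim\m_g$ this yields the desired inequality. No serious obstacle appears; the only delicate points are checking that the stabilizer in $PGL(3)$ is finite (using both $g\geq 2$ and the injectivity into $Aut(C)$) and using the hypothesis $k<3n$ to ensure that $\Sigma$ has the expected dimension, so that the fiber-dimension inequality converts into the desired numerical bound.
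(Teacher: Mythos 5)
Your proof is correct and follows essentially the same route as the paper: both arguments reduce to the inequality $\dim(\Pi_\Sigma(\Sigma))\leq\dim(\Sigma)-8=3n+g-1-k-8=\dim(\m_g)+\rho-k$, using that $k<3n$ forces $\Sigma$ to have the expected dimension and that the general fibre of $\Pi_\Sigma$ has dimension at least $8$, and then combine this with the trivial bound $\dim(\m_g)$. The only difference is in how the fibre bound is packaged: the paper identifies the fibre over $[C]$ with an open set in the variety $F^2_{n,k}(C)$ of framed $g^2_n$'s with $k$ simple ramification points, the $8$ coming from the choice of frame (and it also records the Brill--Noether contribution $\rho-k$, which is not needed for the stated inequality), whereas you exhibit the $8$-dimensional $PGL(3)$-orbit of a general $[\g]$ inside the fibre --- the same $8=\dim (Aut(\p))$ in different clothing.
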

\begin{proof}
We recall that an ordinary cusp $P$ of a plane curve
$\g$ corresponds to a simple ramification point $p$ of the
normalization map $\phi:C\to \g$, i.e. to a simple zero of the
differential map $d\phi$. If we denote by $G^2_{n,k}(C)\subset
G^2_{n}(C)$ the set of $g^2_n$ on $C$ defining a birational
morphism with $k$ simple ramification points, then 
$G^2_{n,k}(C)$ is a locally closed subset of $G^2_n(C)$ and every
irreducible component $G$ of $G^2_{n,k}(C)$ has dimension at least
equal to $\rho-k$, if it is not empty. In particular, if $ F^2_{n,k}(C)$
is the variety whose points correspond to the pairs
$([g^2_n],\{s_0,s_1,s_2\})$ where $[g^2_n]\in G^2_{n,k}(C)$ and
$\{s_0,s_1,s_2\}$ is a frame of the three dimensional space
associated to the linear series $g^2_n$, then every irreducible
component of $F^2_{n,k}(C)$ has dimension at least equal to 
$$\min(8, \rho-k+8).$$
Now, let $\Sigma$ be
one of the irreducible components of $\s$
and let $\ge$ be a general point of $\Sigma$.
Then, if $\g\subset\p$ is the corresponding plane curve and 
$\phi:C\to\g$ is the normalization map, then the fibre over 
the point $[C]\in\m_g$ of the moduli map
$$
\Pi_\Sigma:\Sigma\dashrightarrow\m_g
$$
consists of an open set in one or more irreducible components
of $F^2_{n,k}(C)$. In particular, \textit{every
irreducible component of the general
fibre of} $\Pi_\Sigma$ \textit{has dimension at least equal to} $\min(8,\rho-k+8)$.
Moreover, if $k< 3n$ then $\Sigma$ has the expected dimension 
equal to $N-d-2k=3n+g-1-k$, (see \cite{z2} or \cite{w2}). Finally, if
$g=\pa-k-d\geq 2$, then
$$
\dim(\Sigma)=3n+g-1-k=3g-3+\rho-k+8.
$$
This proves the statement.
\end{proof}
\begin{remark}\label{remarkubnm} The proof of the 
previous lemma still holds if $k\geq 3n$
but $\Sigma$ has the expected dimension. However in general, when $k\geq 3n$, we
don't have a bound for $\dim(\Pi_\Sigma(\Sigma))$. Indeed, in this case
the dimension of the general fibre of the moduli map of $\Sigma$ is still at
least equal to $\rho-k+8$, but $\Sigma$ may have dimension larger
than $3n+g-1-k$. Anyhow, by  the following proposition,
every not empty irreducible component of $\s$ has special
moduli if $k\geq 3n$.
\end{remark}
\begin{proposition} [Arbarello-Cornalba, \cite{ac1}]
Let $C$ be a general curve of genus $g\geq 2$ and let $\phi:C\to\p$ be
a birational morphism, then the degree of the zero divisor of the differential
map of $\phi$ is smaller than $\rho$. In particular, every irreducible
component of $\s$ has special moduli if $\rho=3n-2g-6<k$.
\end{proposition}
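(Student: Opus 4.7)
My plan is to bound the degree $r$ of the ramification divisor $R$ of $\phi$ via a dimension count on the universal Brill-Noether variety, and then to deduce the ``in particular'' clause as a direct consequence.

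To set up: a birational $\phi:C\to\p$ of degree $n$ corresponds to a pair $(L,W)$ with $L\in\mathrm{Pic}^n(C)$ and $W\subset H^0(C,L)$ a $3$-dimensional base-point-free subspace, the choice of a basis of $W$ giving the $8$-dimensional $PGL(3)$-orbit of $\phi$. Letting $C$ vary, the pairs $(C,[W])$ are parametrized by the universal Brill-Noether variety $\mathcal{G}^2_n\to\m_g$, of dimension $3g-3+\rho$ wherever its fibres have the expected dimension; as recalled in the excerpt, for general $C$ of genus $g$ with $\rho\geq 0$ the fibre $G^2_n(C)$ is smooth, irreducible of dimension $\rho$, with generic element defining a nodal immersion (hence unramified).

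I would then bound the codimension of the ramification stratum. Since requiring that $d\phi$ vanish at a given point of $C$ is one linear vanishing condition on $W$, the sub-locus $\mathcal{G}^{2,r}_n\subset\mathcal{G}^2_n$ where $\deg R\geq r$ has codimension at least $r$, hence dimension at most $3g-3+\rho-r$. The hard part will be showing that this expected codimension is actually achieved, equivalently that for general $C$ the stratum $G^{2,\geq r}_n(C)\subset G^2_n(C)$ really has codimension at least $r$; this is a refined Brill-Noether statement about the ramification stratification on the general curve, and is the crux of the argument.

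Granting this, if $\mathcal{G}^{2,r}_n$ dominated $\m_g$, then $3g-3+\rho-r\geq 3g-3$, giving $r\leq\rho$. The strict inequality $r<\rho$ of the proposition follows from the further observation that for $C$ truly general the zero-dimensional stratum $\mathcal{G}^{2,\rho}_n$ does not surject onto $\m_g$. For the ``in particular'' assertion, suppose some irreducible component $\Sigma\subset\s$ with $k>\rho$ had general moduli: then the normalization $C$ of a general $\g\in\Sigma$ would be a general curve, and its normalization morphism $\phi:C\to\g\subset\p$ would be a birational morphism with $\deg R\geq k$ (each of the $k$ cusps of $\g$ lifting to a simple ramification point of $\phi$), and hence $\deg R\geq k>\rho$, contradicting the first assertion.
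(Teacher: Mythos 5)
The paper does not prove this proposition at all: it is quoted as a known theorem of Arbarello and Cornalba \cite{ac1}, and the only ingredient of your argument that the paper actually uses elsewhere is the easy half, namely that each cusp of $\g$ is a simple zero of $d\phi$ and that the determinantal description of the ramification locus forces every component of $G^2_{n,k}(C)$ to have dimension \emph{at least} $\rho-k$ (this is exactly what is used in the proof of lemma \ref{lemmaubnm}). Your proposal runs in the opposite, hard direction, and there it has a genuine gap. The assertion that the sublocus of $\mathcal G^2_n$ where $\deg R\geq r$ ``has codimension at least $r$'' because ramification at a point is a linear condition is not justified: imposing conditions (and letting the ramification points move) gives an \emph{upper} bound on the codimension, i.e.\ a lower bound $3g-3+\rho-r$ on the dimension of every nonempty component, never a lower bound on the codimension. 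The statement you actually need --- that over a general $[C]\in\m_g$ the stratum $G^{2,\geq r}_n(C)$ has dimension at most $\rho-r$, plus the further strictness needed to get $\deg R<\rho$ rather than $\deg R\leq\rho$ --- is precisely the content of the Arbarello--Cornalba theorem. You flag it as ``the crux of the argument'' and then grant it; but granting it is granting the proposition, so nothing has been proved.

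The final deduction of the ``in particular'' clause is correct and is how the paper uses the result: if a component $\Sigma\subset\s$ had general moduli, the normalization $C$ of its general member would be a general curve of genus $g$, and the $k$ cusps would force $\deg R\geq k>\rho$, contradicting the first assertion. If you want an actual proof of the first assertion you must either reproduce the degeneration/transversality argument of \cite{ac1} or find another route to the statement that the ramification stratification of $G^2_n(C)$ is of the expected codimension for $C$ general; a pure dimension count on the universal $\mathcal G^2_n$ cannot deliver it.
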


A sufficient condition for the existence of irreducible families
of plane curves with nodes and cusps with
general moduli is given by the following result.
\begin{proposition}[Kang, \cite{k1}]\label{n>2g-1+2k}
$\s$ is irreducible, not empty and with general moduli if
$n>2g-1+2k$, where $g=\pa-d-k$.
\end{proposition}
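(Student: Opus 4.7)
The plan is to build a standard incidence correspondence between $\s$ and $\m_g$, and then prove irreducibility, non-emptiness, and dominance onto $\m_g$ using the inequality $n > 2g-1+2k$. Consider the variety $\mathcal W$ of tuples $(C, L, V, \{s_0, s_1, s_2\})$, where $C$ is a smooth projective curve of genus $g$, $L \in \mathrm{Pic}^n(C)$, $V \subset H^0(C, L)$ is a three-dimensional subspace such that the induced morphism $\phi_V : C \to \p$ is birational onto its image, which has exactly $k$ cusps and $d$ nodes as only singularities, and $\{s_0, s_1, s_2\}$ is an ordered basis of $V$. The choice of basis eliminates the action of $\mathrm{PGL}(3)$, so that sending $(C, L, V, \{s_i\})$ to $[\phi_V(C)]$ yields a morphism $\mathcal W \to \s$ whose image is dense in every irreducible component of $\s$ that it meets. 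The assertion reduces to showing that $\mathcal W$ is non-empty and irreducible, and that the first projection $\mathcal W \to \m_g$ is dominant.

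For non-emptiness and dominance, I would fix a general smooth curve $C$ of genus $g$, general points $p_1,\dots,p_k \in C$, and a general line bundle $L \in \mathrm{Pic}^n(C)$. Since $n \geq 2g+2k$, the line bundle $L(-2p_1-\cdots-2p_k)$ is non-special, so the subspace of $H^0(L)$ of sections ramifying at every $p_i$ has the expected codimension $2k$. A dimension count inside the Grassmannian $G(3, H^0(L))$ then produces a non-empty locus of $3$-dimensional subspaces $V$ whose associated map $\phi_V$ is birational onto its image and has simple ramification at $p_1,\dots,p_k$; by Bertini, the remaining singularities of $\phi_V(C)$ are ordinary nodes, whose number is forced to be $d$ by the genus formula. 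Varying $C$ in $\m_g$ shows that $\mathcal W \to \m_g$ is dominant, which gives general moduli once irreducibility of $\s$ is known.

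For irreducibility of $\mathcal W$, I would build it up stepwise. The parameter space of triples $(C, L, V)$ with $C \in \m_g$, $L \in \mathrm{Pic}^n(C)$ and $V \in G(3, H^0(L))$ is irreducible: $\m_g$ is irreducible, the relative $\mathrm{Pic}^n$ is a connected torsor over $\m_g$, and the hypothesis $n > 2g-1$ makes the Grassmannian of $3$-dimensional subspaces into a bundle of constant rank $3(n-g-2)$. Imposing the conditions of exactly $k$ simple ramifications and only nodes elsewhere cuts out a locally closed subvariety, and adding the frame $\{s_0,s_1,s_2\}$ only contributes a connected $\mathrm{GL}(3)$-torsor. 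The question therefore reduces to showing that, for general $C$, the variety $G^2_{n,k}(C)$ of $g^2_n$'s on $C$ with exactly $k$ simple ramifications is irreducible of the expected dimension $\rho-k$.

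This last step is where I expect the main obstacle to lie. A natural approach, inspired by Harris's proof of the irreducibility of Severi varieties, is to degenerate $C$ to a rational $g$-nodal curve and to track the irreducible components of $G^2_{n,k}(C)$ through the degeneration by a monodromy argument; the inequality $n > 2g-1+2k$ should be exactly the numerical condition that guarantees every component specializes to the same irreducible boundary locus. An alternative route is to realize every $g^2_n$ on $C$ as a projection of the complete, non-special linear series $|L|$ (using $n\geq 2g+1$) from a codimension-$3$ center, and to verify directly that the subvariety of projection centers producing exactly $k$ cusps and $d$ nodes is irreducible. In either approach, the hypothesis $n > 2g-1+2k$ is used in an essential way to ensure that the ramification divisor can be moved freely on $C$ without obstruction.
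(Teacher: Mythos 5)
The paper does not actually prove this proposition: it is quoted from Kang, and the only observation added is that general moduli comes for free because Kang's construction exhibits the general member of $\s$ as a projection of an \emph{arbitrary} smooth curve $C$ of genus $g$, embedded in $\mathbb P^{n-g}$ by a complete non-special linear series, from a general linear center of codimension three meeting the tangent variety of $C$ in $k$ points. Your second, ``alternative'' route is exactly this argument; your primary route, as written, has a genuine gap. You reduce irreducibility of $\s$ to irreducibility of $G^2_{n,k}(C)$ for a \emph{general} $C$, but that reduction is incomplete: if some component of $\mathcal W$ (equivalently of $\s$) fails to dominate $\m_g$, irreducibility of the fibre over a general $[C]$ says nothing about it, so you must first exclude components supported over proper subvarieties of $\m_g$, which you do not do. You then defer the fibrewise irreducibility itself to a Harris-style degeneration to a $g$-nodal rational curve with a monodromy argument --- a substantial piece of work that you neither carry out nor need in this range of $n$.

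Both gaps close simultaneously if you commit to the projection description from the outset. The hypothesis $n>2g-1+2k$ gives $n\geq 2g+2k$, so for $k\geq 1$ every line bundle $L$ of degree $n$ on \emph{every} smooth curve $C$ of genus $g$ is non-special and very ample; hence every birational $g^2_n$ on any such $C$ --- not just on a general one --- is contained in $|L|$ and is obtained by projecting the complete embedding $C\subset\mathbb P^{n-g}$ from a codimension-three center, the cusps of the image corresponding to the points where the center meets the tangent developable. The resulting parameter space (the irreducible Picard variety over $\m_g$, the frame, a $k$-tuple of points of $C$ with their tangent lines, and a center through one point of each tangent line) is visibly irreducible and its image is dense in all of $\s$, which yields non-emptiness, irreducibility and dominance over $\m_g$ in one stroke. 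Your dimension count over a fixed general $C$ --- non-speciality of $L(-2p_1-\cdots-2p_k)$ giving the expected codimension $2k$, plus a general-position argument forcing the residual singularities to be $d$ nodes --- is the right ingredient for non-emptiness and survives unchanged; the case $k=0$ is the classical Severi range and is treated separately in the paper via Harris and Sernesi.
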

Actually, in \cite{k1}, Kang proves that if $n>2g-1+2k$, then $\s$
is not empty and irreducible. But from his proof it follows that,
under the hypothesis of proposition \ref{n>2g-1+2k}, $\s$ has
general moduli because the general element of $\s$ corresponds to
a curve which is a projection of an arbitrary smooth curve $C$ of
genus $g$ in $\mathbb P^{n-g}$, from a general $(n-3)$-plane
intersecting the tangent variety of $C$ in $k$ different points.
Another result which may be used to find examples of families of
plane curves with nodes and cusps having general moduli is the
following. Let $g^r_n$ be a linear series on $C$ associated to
a $(r+1)$-space $W\subset H^0(C,\mathcal L)$, where $\mathcal L$
is an invertible sheaf on $C$, and let $\{s_0,\dots,s_r\}$ be a
basis of $W$, then the ramification sequence of the $g^r_n$ at $p$ is
the sequence $b=(b_0,\dots,\,b_r)$ with $b_i=ord_p s_i-i.$
Choosing another basis of $W$, the ramification sequence of
$g^r_n$ at $p$ doesn't change. We say that the ramification
sequence of the $g^r_n$ at $p$ is at least equal to
$b=(b_0,\dots,\,b_r)$ if $b_i\leq ord_p s_i-i$, for every $i$, and
we write $(ord_p s_0,\dots,ord_p s_r-r)\geq (b_0,\dots,\,b_r)$. 
\begin{proposition}[Proposition 1.2 of \cite{eh2}]\label{5.43}
Let $C$ be a general curve of genus $g$, let $p$ be a general
point on $C$ and let $b=(b_0,\dots,\,b_r)$ be any ramification
sequence. There exists a $g^r_n$ on $C$ having ramification at
least $b$ at $p$ if and only if
$$
\sum_{i=0}^r(b_i+g-n+r)_+\leq g,
$$
where $(-)_+:=max(-,0)$.
\end{proposition}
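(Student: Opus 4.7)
The plan is to use the Eisenbud--Harris theory of limit linear series, specializing $(C,p)$ to a pointed curve of compact type. Concretely, I would take $C_0 = E_1\cup E_2\cup\cdots\cup E_g$ to be a chain of elliptic curves with general attaching nodes $q_i \in E_i\cap E_{i+1}$, and let $p$ be a general point on the last component $E_g$. By the properness of the moduli of limit $g^r_n$'s with prescribed ramification and by semicontinuity of vanishing sequences, a general smooth $(C,p)$ admits a $g^r_n$ with ramification at least $b$ at $p$ if and only if $(C_0,p)$ admits a refined limit $g^r_n$ with ramification at least $b$ at $p$; this degeneration reduces both directions to a combinatorial question on the chain.

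For the ``only if'' direction, I would analyze a candidate refined limit series $\ell=\{\ell_i\}$: each aspect $\ell_i$ on $E_i$ is a $g^r_n$, with vanishing sequences at the two nodes $q_{i-1}, q_i$ (and at $p$ for $i=g$, where that sequence must dominate $(b_0, b_1+1,\dots,b_r+r)$). The refined limit compatibility at each node requires $a_j(\ell_i,q_i)+a_{r-j}(\ell_{i+1},q_i)=n$, while the existence of a $g^r_n$ on an elliptic curve with two general marked points and prescribed vanishing imposes a degree bound derivable from the Pl\"ucker formula. Chaining these constraints across the $g$ components and tracking the defect in the vanishing orders shows that each index $i$ contributes an amount of at least $(b_i+g-n+r)_+$ to a budget bounded above by $g$. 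Summing yields the required inequality.

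For the ``if'' direction, I would construct a refined limit $g^r_n$ on $C_0$ explicitly, realizing the ramification $b$ at $p$ and distributing the total defect $\sum(b_i+g-n+r)_+$ one unit per elliptic component; the non-negativity of $g-\sum(b_i+g-n+r)_+$ guarantees enough components to do so. The constructed limit series then smooths to a genuine $g^r_n$ on the nearby general smooth curve by the Eisenbud--Harris regeneration theorem, whose dimension hypothesis is exactly this non-negativity.

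The main obstacle is the combinatorial bookkeeping in the ``only if'' step: verifying that the truncation $(-)_+$ in the formula is forced by the geometry, i.e.\ that an index $i$ with $b_i+g-n+r\leq 0$ can be realized without consuming any of the chain's $g$ units of flexibility, while each index with $b_i+g-n+r>0$ must consume exactly that many units. The remaining ingredients (properness and semicontinuity of limit linear series, and the regeneration theorem) are standard in the Eisenbud--Harris theory.
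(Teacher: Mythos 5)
The paper does not prove this statement: it is imported verbatim as Proposition 1.2 of Eisenbud--Harris \cite{eh2}, so there is no internal proof to compare against. Your outline is, in substance, the strategy of the original Eisenbud--Harris argument: degenerate $(C,p)$ to a chain of elliptic curves inside a curve of compact type, use properness and semicontinuity of limit linear series with prescribed ramification for necessity, and use an explicit construction of a refined limit series together with the regeneration (smoothing) theorem for sufficiency. Note that the truncated sum $\sum_i(b_i+g-n+r)_+$ is exactly the adjusted Brill--Noether count in disguise: when every term is positive it equals $\sum_i b_i+g-\rho(g,r,n)$, so the inequality is the nonnegativity of $\rho(g,r,n)-\sum_i b_i$, and the truncation records that ramification indices below the ``free'' level $n-g-r$ cost nothing. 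So the approach is the right one and matches the cited source.

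That said, as written your text is a roadmap rather than a proof, and you say so yourself: the entire content of the proposition sits in the two steps you defer. First, the reduction to $(C_0,p)$ is not symmetrically an ``if and only if'' by semicontinuity alone; semicontinuity gives that a series on the general smooth fibre limits to a (possibly crude) limit series on $C_0$, while the converse requires the regeneration theorem with its dimension hypothesis --- you do invoke this later, but the opening sentence overstates what the degeneration buys for free. Second, the per-component accounting --- that an elliptic component with two general marked points can absorb at most (and, for the construction, exactly) one unit of drop in the adjusted Brill--Noether number, and that chaining the compatibility conditions $a_j(\ell_i,q_i)+a_{r-j}(\ell_{i+1},q_i)\geq n$ across $g$ components forces $\sum_i(b_i+g-n+r)_+\leq g$ --- is the heart of the matter and is asserted, not verified. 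To make this a proof you would need to carry out that bookkeeping or cite the additivity of the adjusted Brill--Noether number under degeneration from the limit-linear-series papers of Eisenbud and Harris; as it stands, the proposal correctly identifies the machine but does not run it.
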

From proposition \ref{5.43}, we easily deduce the following result.
\begin{corollary}\label{5.431}
Suppose that $k\leq 3$ and $\rho=3n-2g-6\geq 2k$. Then 
$\Sigma^n_{k,d}$ is not empty, irreducible and it has general moduli.
\end{corollary}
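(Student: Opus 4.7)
The plan is to combine Kang's irreducibility result with a direct application of proposition \ref{5.43} (in a multi-point form) to produce, on a general curve of genus $g$, a $g^2_n$ mapping it to the plane as a curve with exactly $k$ cusps and $d$ nodes.

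Irreducibility of $\s$ for $k\leq 3$ (whenever non-empty) is given by Kang's theorem recalled in section \ref{S-E}; moreover, since $k\leq 3<3n$, every non-empty irreducible component automatically has the expected dimension $3n+g-1-k$. So the bulk of the work is to show simultaneously that $\s$ is non-empty and that $\Pi_\Sigma:\s\dashrightarrow\m_g$ is dominant, and both follow at once if I can exhibit, for a general $[C]\in\m_g$, a $g^2_n$ on $C$ whose associated map to $\p$ has image belonging to $\s$.

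The cusp condition translates to Brill-Noether ramification data as follows: an ordinary cusp of $\phi(C)$ at $\phi(p_j)$ corresponds to a point $p_j\in C$ where the three sections of the $g^2_n$ have orders $(0,2,3)$, that is, to a ramification sequence $(0,1,1)$ of weight $w=2$. Fixing $k$ general points on a general $C$ and imposing ramification sequence at least $(0,1,1)$ at each of them, the generalized Brill-Noether number becomes $\rho-2k\geq 0$ by hypothesis; the multi-point analogue of proposition \ref{5.43}, also due to Eisenbud--Harris, then produces a $g^2_n$ on $C$ with precisely these ramification conditions.

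It remains to verify that the induced $\phi:C\to\p$ is birational onto a curve whose only further singularities are nodes; the number of these nodes is then forced to be $d=\pa-k-g$ by the genus formula. Birationality and nodality of a general plane model of a general curve with $\rho\geq 0$ are already recorded in property (3) of section \ref{krnm}, and the extension to the case with marked ramifications follows by a general-position argument on the pointed curve $(C,p_1,\dots,p_k)$. The delicate step will be precisely this last one: ruling out that the imposed ramifications conspire to create base points, collide two prescribed cusps, or force a cusp to sit on the nodal locus of the image. Here the slack afforded by $k\leq 3$ and $\rho\geq 2k$ leaves enough room to run a standard dimension count on the incidence variety of linear series with marked ramification points and conclude.
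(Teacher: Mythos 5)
Your overall strategy (Kang for irreducibility, then a single construction over a general curve of genus $g$ giving both non-emptiness and dominance) is reasonable, but at the crucial point it diverges from the paper's route and leaves a genuine gap. The paper does not impose simple ramification $(0,1,1)$ at $k$ general points: it imposes the single ramification sequence $(0,k,k)$ at one general point, which is admissible by proposition \ref{5.43} exactly when $\rho\geq 2k$ since $\sum_i(b_i+g-n+2)=3(g-n+2)+2k=g-\rho+2k$, and then invokes Shustin's theorem on equiclassical deformations (theorem 1.1 of \cite{sh3}): for $k\leq 3n-4$, any plane curve of geometric genus $g$ whose normalization map has a ramification point with sequence $\geq(0,k,k)$ lies in $\s$. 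That one citation is what converts ``a $g^2_n$ with prescribed ramification exists on a general $C$'' into ``a point of $\s$ lies over a general $[C]\in\m_g$'', and it is precisely the step for which you have supplied no substitute.

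Concretely, the gap is your final paragraph. The multi-point Eisenbud--Harris theorem (which, note, is not proposition \ref{5.43} as stated --- that is the one-point version, though the multi-point form does exist) produces a $g^2_n$ with ramification at least $(0,1,1)$ at $k$ general points when the adjusted number $\rho-2k$ is nonnegative. But to conclude that the image lies in $\s$ you must show that the \emph{general} such series is base-point free, birational onto a degree-$n$ image, has ramification exactly $(0,1,1)$ at the $p_j$ and no further ramification (so exactly $k$ ordinary cusps), and only nodes elsewhere. This is the pointed analogue of theorem 3.1 of \cite{ac1} and is not a formal consequence of it: knowing that the general member of $G^2_n(C)$ is birational and nodal says nothing a priori about the general member of the codimension-$2k$ locus of ramified series, and when $\rho=2k$ that locus is finite over the fixed pointed curve, so there is no ``general member'' slack at all. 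The ``standard dimension count'' you invoke would have to bound, uniformly over the moduli of $k$-pointed curves, the loci of non-birational series, series with base points, series with excess or non-simple ramification, and series whose cusps collide with the nodal locus --- none of which is carried out. Either perform that count in detail, or do as the paper does and replace it by Shustin's equiclassical-deformation theorem, which lets one work with a single degenerate ramification point and never control the local structure of the plane model at all.
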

\begin{proof}
By \cite{k2}, the variety $\Sigma_{k,d}^n$ is irreducible for every $k\leq 3$ 
and $d\leq\pa-k$. Moreover, by using classical arguments, one can prove that 
$\s$ is not empty if $k\leq 4$ and $d\leq \pa-4$, (see, for example, 
corollary 3.18 of chapter two of \cite{tesi}).   
Finally, by theorem 1.1 of \cite{sh3}, by using the terminology of proposition
\ref{5.43}, under the hypothesis $k\leq 3n-4$, in particular if $k\leq 3$, the variety $\Sigma_{k,d}^n$ contains every point of $\n$
corresponding to a plane curve $\g$ of genus $g=\pa-k-d$ such that the
normalization morphism of $\g$ has at least a ramification point
with ramification sequence $(b_0,b_1,b_2)\geq (0,k,k)$. Then, by
proposition \ref{5.43}, if $\rho\geq 2k$ and $k\leq 3$, the moduli map of
$\Sigma_{k,d}^n$ is surjective.
\end{proof}

\section[On the number of moduli...]{On the existence of certain
families of plane curves with nodes and cusps in sufficiently
general position}\label{ote} As we already observed, we don't have a complete answer 
for the existence problem of $\s$. In this section we are
interested in a little more specific existence problem. We shall
prove the existence of plane curves with nodes and cusps as
singularities whose singular points are in sufficiently general
position to impose independent linear conditions to a linear
system of plane curves of a certain degree. 
\begin{definition}
A projective curve $C\subset\mathbb P^r$ is said to be
\emph{geometrically $t$-normal} if the linear series cut out on
the normalization curve $\tilde C$ of $C$ by the pull-back to $\tilde C$
of the linear system of hypersurfaces of $\mathbb P^r$ of degree $t$ is complete. 
\end{definition}
From a geometric point of view, a projective curve $C\subset
\mathbb P^r$ is geometrically $t$- normal if and only if the image curve
$\nu_{t,r}(C)$ of $C$ by the Veronese embedding $\nu_{t,r}:\mathbb
P^r\to\mathbb P^{\binom{r+t}{t}}$ of degree $t$, is not a
projection of a non-degenerate curve living in a higher
dimensional projective space. We shall say that a curve is
\textit{geometrically linearly normal} (\textit{g.l.n.} for short)  
if it is geometrically 1-normal.
Every such a curve $C$ is not a projection of a curve lying in a
projective space of larger dimension.

The following result is proved under more general hypotheses in \cite{ag},
theorem 2.1. 
\begin{lemma}\label{gknpc}
Let $\Gamma\subset\p$ be an irreducible and reduced plane curve of
degree $n$ and genus $g$ with at most nodes and cusps as
singularities. Let $t$ be an integer such that $n-3-t<0$, then
$\g$ is geometrically $t$-normal if and only if it is smooth. On
the contrary, if $n-3-t\geq 0$, the plane curve $\g$ is
geometrically $t$-normal if and only if its singular points impose
independent linear conditions to plane curves of degree $n-3-t$.
\end{lemma}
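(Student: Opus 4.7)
The plan is to translate geometric $t$-normality into the injectivity of a coboundary map on $\g$ and then to analyze that injectivity via Serre duality on the Gorenstein curve $\g$. Let $\phi\colon\ct\to\g$ be the normalization and set $L_t:=\phi^*\o_\g(t)$. Since $H^1(\p,\o_\p(t-n))=0$, the restriction map $H^0(\p,\o_\p(t))\to H^0(\g,\o_\g(t))$ is surjective for $t\geq 0$, so geometric $t$-normality of $\g$ is equivalent to the injection $H^0(\g,\o_\g(t))\hookrightarrow H^0(\ct,L_t)$ being an isomorphism. I would exploit the short exact sequence
$$
0\to\o_\g\to\phi_*\o_{\ct}\to\mathcal T\to 0,
$$
in which $\mathcal T$ is a skyscraper supported on the singular locus of $\g$; because $\delta=1$ at every node and every cusp, $\mathcal T$ has length $d+k$. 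Tensoring with the locally free sheaf $\o_\g(t)$ and using the projection formula yields
$$
0\to\o_\g(t)\to\phi_*L_t\to\mathcal T\to 0,
$$
whose long cohomology sequence
$$
0\to H^0(\o_\g(t))\to H^0(\ct,L_t)\to\c^{d+k}\stackrel{\partial}{\to}H^1(\o_\g(t))\to H^1(\ct,L_t)\to 0
$$
reduces the problem to the injectivity of the coboundary $\partial$.

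For the case $n-3-t<0$: by adjunction on $\p$ the curve $\g$ has dualizing sheaf $\omega_\g\cong\o_\g(n-3)$, so Serre duality on the Gorenstein curve $\g$ gives $H^1(\o_\g(t))^*\cong H^0(\o_\g(n-3-t))=0$, the vanishing following because $\o_\g(n-3-t)$ has strictly negative degree on the reduced irreducible curve $\g$. The coboundary $\partial$ therefore maps into the zero space, and it is injective if and only if $\c^{d+k}=0$, i.e.\ if and only if $\g$ is smooth.

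For the case $n-3-t\geq 0$: Serre duality again gives $H^1(\o_\g(t))^*\cong H^0(\o_\g(n-3-t))$, and since $H^1(\p,\o_\p(-3-t))=0$ the restriction $H^0(\p,\o_\p(n-3-t))\twoheadrightarrow H^0(\o_\g(n-3-t))$ is surjective. The key step I would then carry out is to identify the Serre-dual of $\partial$ with the evaluation map
$$
e_Z\colon H^0(\o_\g(n-3-t))\longrightarrow \c^{d+k}
$$
at the $d+k$ singular points of $\g$. Granted this identification, $\partial$ is injective if and only if $e_Z$ is surjective, if and only if the singular points impose independent conditions on $H^0(\o_\g(n-3-t))$, or equivalently (by the surjectivity just recalled) on plane curves of degree $n-3-t$. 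The hard part is precisely this Serre-duality identification: it rests on the local Gorenstein duality at each node and each cusp, where the conductor ideal equals the maximal ideal, so that the length-one quotient $\mathcal T_p$ is canonically dual to the residue field $\o_{\g,p}/\mathfrak m_p$; under this local pairing $\partial$ is identified with evaluation of sections of $\o_\g(n-3-t)$ at the singular points of $\g$. This is exactly where the hypothesis ``only nodes and cusps'' enters, since for heavier singularities the correct objects imposing conditions would be thicker subschemes supported on the singular locus rather than the reduced singular set.
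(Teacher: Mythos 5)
Your proof is correct, but it runs along a genuinely different track from the paper's. The paper works entirely on the normalization $C$: it writes geometric $t$-normality as a numerical condition via Riemann--Roch on $C$, invokes the classical adjoint identification $H^0(C,\omega_C(-t))=H^0(C,\mathcal O_C(n-3-t)(-\Delta))$ with $\Delta$ the adjoint divisor, and then translates $h^0(C,\mathcal O_C(n-3-t)(-\Delta))$ into $h^0(\mathbb P^2,\mathcal I_Z(n-3-t))$ by blowing up the singular points ($Z$ the reduced singular locus); the fact that for nodes and cusps the adjoint conditions amount to simple passage through the singular points is where the hypothesis on the singularities enters. You instead stay on the singular Gorenstein curve $\Gamma$: the sequence $0\to\mathcal O_\Gamma\to\phi_*\mathcal O_{\tilde C}\to\mathcal T\to 0$ with $\mathrm{length}(\mathcal T)=d+k$, Serre duality with $\omega_\Gamma\cong\mathcal O_\Gamma(n-3)$, and the local statement that at a node or cusp the conductor equals the maximal ideal, which identifies the Serre dual of the coboundary with evaluation at the reduced singular points. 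The two arguments use the same underlying fact (conductor $=$ maximal ideal, equivalently $\delta_p=1$ and adjoints pass simply through $p$), but yours trades the paper's dimension count and blow-up computation for an exactness/duality argument; your version makes more transparent exactly where ``nodes and cusps'' is used and how the statement would change for worse singularities (the reduced points would be replaced by the subscheme cut out by the conductor), at the cost of the one step you rightly flag as the crux --- the identification of $\partial^\vee$ with the evaluation map, which requires the local Gorenstein duality $\mathcal Ext^1(\mathcal T_p,\omega_\Gamma)\cong(\mathcal T_p)^\vee$ and the equality $\phi_*\omega_{\tilde C}=\mathfrak c\cdot\omega_\Gamma$; that step is standard but should be written out (or referenced) to make the proof complete.
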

We recall the following classical definition.
\begin{definition}\label{adjointdivisor}
Let $\g\subset\p$ be a plane curve of degree $n$ with $d$ nodes at
$p_1,\dots,p_d$ and $k$ cusps at $q_1,\dots,q_k$ as singularities.
Let $\phi:C\to \g$ be the normalization of $\g$. 
The adjoint divisor $\Delta$ of $\phi$ is the divisor on $C$ defined 
by $\Delta=\sum_{i=1}^{d}\phi^{-1}(p_i)+
\sum_{j=1}^{k}2\phi^{-1}(q_j)$. 
\end{definition}
\begin{proof}[Proof of lemma \ref{gknpc}.]
Let $\g$ be a plane curve as in the statement of the lemma. Then,
$\g$ is geometrically $t$-normal if
and only if, by definition,
$$
h^0(C,\o_C(t))=h^0(\p,\o_{\p}(t))-h^0(\p,\mathcal I_{\g}(t))
$$
where $\mathcal I_{\g}$ is the ideal sheaf of $\g$ in $\p$
and $\o_C(t):=\o_C(t\phi^*(H))$, where $H$ is the general
line of $\p$. By
Riemann-Roch theorem, $\g$ is geometrically $t$-normal if and only
if
\begin{equation}\label{equgtn}
h^0(C,\omega_C(-t)))=-nt+g-1+\frac{(t+1)(t+2)}{2}-h^0(\p,\mathcal
I_{\g}(t)),
\end{equation}
where $g$ is the geometric genus of $\g$ and $\omega_C$ is the
canonical sheaf of $C$. On the other hand, it is well known that 
$H^0(C,\omega_C(-t))=H^0(C,\o_C(n-3-t)(-\Delta))$, where $\Delta$ 
is the adjoint divisor of $\phi$, (see definition \ref{adjointdivisor}
and \cite{acgh}, appendix A). 
If $n-3-t < 0$ then
$h^0(C,\o_C(n-3-t))=0$ and $\g$ is geometrically $t$-normal if and only
if
$$
h^0(\p,\o_{\p}(t))-h^0(\p,\mathcal
I_{\g}(t))=n t-\frac{n^2-3n}{2}+\delta,
$$
where $\delta=\pa -g=\deg(\Delta)/2$. This equality is verified if
and only if $\delta=0$, i.e. $\g$ is smooth. If $n-3\geq t$,
$h^0(\p,\mathcal I_{\g}(t))=0$ and \eqref{equgtn} is verified if
and only if
$$
h^0(C,\o_C(n-3-t)(-\Delta))=h^0(\p,\o_{\p}(n-3-t))-\delta.
$$
On the other hand, if $\psi: S\to\p$ is the blowing-up of
the plane at the singular locus of $\g$, denoting by
$\sum_i E_i$ the pullback of the singular locus of $\g$ with
respect to $\psi$ and by $\o_S(r)$ the sheaf $\o_S(r\psi^*(H))$, 
we have that
$$h^0(C,\o_C(n-3-t)(-\Delta))=
h^0(S,\o_{S}(n-3-t)(-\sum_i E_i))=h^0(\p,\o_{\p}(n-3-t)\otimes A)$$
where $A$ is the ideal sheaf of singular
points of $\g$.
\end{proof}
\begin{remark}\label{kimplies}
Notice that, if an irreducible and reduced plane curve $\g$ of
degree $n$ with only nodes and cusps as singularities is
geometrically $t$-normal, with $t\leq n-3$, then it is geometrically
$r$-normal for every $r\leq t$. Indeed, if a set of points imposes independent linear conditions to a linear system $S$, then it imposes independent linear conditions 
to every linear system $S^\prime$ containing $S$.
\end{remark}
\begin{theorem}\label{gknnc}
Let $\s$ be the variety of irreducible and reduced plane curves of
degree $n$ with $d$ nodes and $k$ cusps. Suppose that $d$, $k$, $n$
and $t$ are such that
\begin{eqnarray}
d + k &\leq & \frac{n^2-(3+2t)n+2+t^2+3t}{2}=
h^0(\mathcal{O}_{\p}(n-t-3))\label{n+c}\\
t & \leq & n-3\,\,\hbox{if}\,\, k=0,\\
\label{c} k  & \leq &  6\,\,\hbox{if}\,\,t= 1,\,2\,\, \hbox{and}\\
\label{c1} k  &\leq &  6+[\frac{n-8}{3}],\,\,\hbox{if}\,\,t=3,
\end{eqnarray}
where $[-]$ is the integer part of $-$. Then the variety $\s$ is
not empty and there exists at least an irreducible component
$W\subset\s$ whose general element corresponds to a geometrically
$t$-normal plane curve.
\end{theorem}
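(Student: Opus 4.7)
The overall plan is to prove Theorem \ref{gknnc} by induction on the degree $n$, combined with explicit constructions for small cases. By Lemma \ref{gknpc}, since $t\leq n-3$, the conclusion that $\g$ be geometrically $t$-normal is equivalent to the singular points of $\g$ imposing independent linear conditions on plane curves of degree $n-t-3$. Numerical hypothesis \eqref{n+c} is precisely the a priori necessary bound $d+k\leq h^0(\o_{\p}(n-t-3))$. I would reduce everything to producing, for each admissible $(n,d,k,t)$, a single plane curve $\g$ of degree $n$ with exactly $d$ nodes and $k$ cusps whose singular locus imposes independent conditions on $|\o_{\p}(n-t-3)|$; semicontinuity of the cohomology of the ideal sheaf of the singular scheme then guarantees that the same holds for the general element of some irreducible component $W$ of $\s$.

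The case $k=0$ is essentially classical and should be handled first: here I would invoke the irreducibility of $V_{n,g}=\Sigma^n_{0,d}$ together with a dimension count showing that, in a general nodal curve of degree $n$ and genus $g=\pa-d$ with $d\leq h^0(\o_{\p}(n-t-3))$, the $d$ nodes impose independent conditions on $|\o_{\p}(n-t-3)|$; the locus in the symmetric product where $d$ points fail to be in general linear position has codimension at least one, and the map from $V_{n,g}$ to this symmetric product has large enough image to avoid it. For the positive cusp case, I would proceed by induction on $n$. The base of the induction requires, for each fixed $t\in\{1,2,3\}$ and each $k$ in the allowed range, an explicit construction of a plane curve $\g_0$ of the smallest admissible degree carrying exactly $k$ cusps in sufficiently general position with respect to $|\o_{\p}(n-t-3)|$. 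For $t=1,2$ and $k\leq 6$, six cusps lie on a (unique) conic, which is the geometric content of conditions \eqref{c}; for $t=3$ the linear growth $k\leq 6+[(n-8)/3]$ reflects the extra freedom coming from replacing a conic by a cubic adjoint.

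For the inductive step I would use the standard degeneration argument: starting from a curve $\g_0$ of degree $n-1$ with the required independence property (given by the inductive hypothesis) and a line $L$ meeting $\g_0$ transversely in $n-1$ points, the reducible curve $\g_0\cup L$ can be smoothed inside $\s$ by preserving the preassigned cusps of $\g_0$, preserving an arbitrary subset of the $d$ nodes of $\g_0$ and of the new nodes in $\g_0\cap L$, and smoothing the rest (this is exactly the deformation result recalled in Section~\ref{S-E}, from \cite{z2} and \cite{gk}). To conclude one has to check that the new singular set still imposes independent conditions on $|\o_{\p}(n-t-3)|$: if $A$ is the ideal sheaf of $\text{Sing}(\g_0)$ and $A^\prime$ that of $\text{Sing}(\g_0\cup L)$, the residual exact sequence
\[
0\to \o_{\p}(n-t-4)\otimes A\to \o_{\p}(n-t-3)\otimes A^\prime \to \o_L(n-t-3)(-\text{Sing}(L\cap\g_0))\to 0
\]
reduces the $h^1$-vanishing for $A^\prime$ to the inductive $h^1$-vanishing for $A$ (now in degree $n-t-4$, to which remark \ref{kimplies} applies) together with the much easier statement that at most $n-t-2$ generic points on $L\cong\mathbb P^1$ impose independent conditions on $\o_L(n-t-3)$.

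The main obstacle is the inductive base for cusps: cusps are rigid, they cannot be moved freely, and one really needs specific geometric configurations (six cusps on a conic, or cuspidal curves arising from classical constructions such as duals of plane curves or projections of smooth space curves with prescribed osculating behaviour) to ensure the independence of the associated linear conditions. Once such base configurations are in hand, the inductive machinery above is essentially formal; the arithmetic of the bounds \eqref{c}--\eqref{c1} then emerges from comparing $k$ with $h^0(\o_{\p}(n-t-3))$ and with the number of conditions these cuspidal configurations can impose.
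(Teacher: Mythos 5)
Your overall architecture --- reduce to the case of equality in the numerical bounds, translate geometric $t$-normality into an $h^1$-vanishing for the ideal sheaf of the singular scheme via Lemma \ref{gknpc}, construct curves by induction on the degree by attaching an auxiliary curve to one already known to have its singularities in general position, smooth the union while preserving a prescribed subset of the singularities, and conclude by semicontinuity --- is the paper's. Your residual exact sequence on $\p$ is the ideal-sheaf translation of the restriction sequence $0\to\o_D(t)(-P_1-\cdots-P_{t^2+1})\to\o_{C^\prime}(t)\to\o_C(t)\to 0$ that the paper writes on the partial normalization of the union, and the bookkeeping (keep exactly enough intersection points as nodes so that the residual linear series on the added component has no sections) is the same. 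However, there are two genuine gaps.

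First, your argument for $k=0$ does not work as stated. Knowing that the locus of $d$-tuples in special position has codimension at least one in $\mathrm{Sym}^d(\p)$ and that the image of $V_{n,g}\to\mathrm{Sym}^d(\p)$ is large does not prevent that image from lying inside the bad locus: a positive-dimensional subvariety can perfectly well be contained in a divisor, and for small $g$ the node map is far from dominant ($\dim V_{n,0}=3n-1$ versus $\dim\mathrm{Sym}^d(\p)=(n-1)(n-2)$). One must actually exhibit a nodal curve whose nodes impose independent conditions, which is exactly what the paper's induction does (base case $n=t+3$, $d=1$, then attach lines keeping $t+1$ intersection points as nodes); alternatively one cites \cite{ser} for $t=1$ or \cite{ag} in general. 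Second, and more seriously, your inductive step attaches only a line, which raises the degree by one and leaves $k$ unchanged. For $t=3$ the bound \eqref{c1} allows $k=6+[\frac{n-8}{3}]$, which grows with $n$, so a line-only induction starting from finitely many base configurations can never reach the maximal $k$ for large $n$: you would need an infinite list of base cases, one new explicit cuspidal configuration for each degree in which the bound jumps, and your proposal gives no mechanism to produce them. The paper's device is to attach, at each step, an irreducible curve $D$ of degree $t$ --- a line for $t=1$, a smooth conic for $t=2$, and a \emph{cuspidal cubic} for $t=3$ --- preserving $t^2+1$ points of $\g\cap D$ as nodes; for $t=3$ this raises the degree by $3$ and the cusp count by $1$ in lockstep with \eqref{c1}, and reduces everything to the finite list of base cases $(n,k)=(4,1),(5,3),(6,6),\dots$ handled by explicit constructions (projections of the rational normal quintic, the Zariski sextic with six cusps not on a conic, the three-cuspidal quartic). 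Without this, or an equivalent way of manufacturing new cusps during the induction, your proof does not close.
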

\begin{remark}\label{notsharpness}
As we shall see in the next section, (see proposition \ref{bnm}), the geometric linear normality
of the plane curve corresponding to the general element of an irreducible
component $\Sigma$ of $\s$, is related with the
number of moduli of $\Sigma$. Another motivation for the previous 
theorem has been the family of irreducible plane sextics with six cusps.
By \cite{z2}, we know that $\Sigma^6_{6,0}$
contains at least two irreducible components $\Sigma_1$
and $\Sigma_2$. The general  point of $\Sigma_1$ corresponds to a
sextic with six cusps on a conic, whereas the general element of
$\Sigma_2$ corresponds to a sextic with six cusps not on a conic.
Note that, by the previous lemma the general element of $\Sigma_2$
parameterizes a geometric linearly normal sextic, unlike the
general element of $\Sigma_1$, which corresponds to a projection
of a canonical curve of genus four. Theorem \ref{gknnc}, proves in particular that,
under a suitable restriction, (see inequality \eqref{n+c}),
on the genus of the curve corresponding to the general element 
of the family and, if the number of the cusps is small,
the variety $\s$ contains a not empty irreducible component whose general element
corresponds to a curve which is not a projection of an other curve,
lying in a projective space of larger dimension. 
We notice that the inequality
\eqref{n+c} of the previous theorem can't be improved. Indeed, if
$g=\pa-k-d$, then $k+d> h^0(\p,\o_{\p}(n-3-t))$ if and only if
$g<\frac{2tn-t^2-3t}{2}$. On the other hand, by using the 
same notation as in theorem \eqref{gknnc}, if $g<\frac{2tn-t^2-3t}{2}$,
then, by Riemann-Roch theorem, we have that $h^0(C,\o_C(t))\geq tn-g+1>
\frac{t^2+3t}{2}+1=h^0(\p,\o_{\p}(t)).$ On the contrary, inequalities
\eqref{c} and \eqref{c1} are not sharp, (see example \ref{cc1}). 
\end{remark}

In the case of $k=0$ and $t=1$, theorem \ref{gknnc} has been
proved by Sernesi in \cite{ser}, section 4. The case $k=0$ and
$t\leq n-3$ is already contained in \cite{ag}. To show theorem
\ref{gknnc}, we proceed by induction on the degree $n$ and on the
number of nodes and cusps of the curve. The geometric idea at
the base of the induction on the degree of the curve is, mutatis 
mutandis, the same as that of Sernesi.

\begin{proof}[ Proof of theorem
\ref{gknnc}.] Let $t$ be a positive integer such that $n-3-t\geq 0$ and let
$W\subset\s$ be an irreducible component of $\s$. By standard semicontinuity 
arguments it follows that,
\textit{if there exists a point $[C]\in W$ corresponding to a geometrically
$t$-normal curve with only $k$ cusps and $d$ nodes as
singularities, then the general element of $W$ corresponds to
a geometrically $t$-normal plane curve}.
Moreover, \textit{if the theorem is true for fixed $n$, $t\leq n-3$, $k$
as in \eqref{c} or in \eqref{c1} and $k+d$ as in \eqref{n+c}, then
the theorem is true for $n$, $t$ and any ${k}^\prime\leq k$ and
${d }^\prime\leq d+k-k^\prime$}. Indeed, from the hypotheses \eqref{n+c}, \eqref{c} and
\eqref{c1}, it follows in particular that $k<3n$. By section \ref{S-E}, under this hypothesis, for every $k^\prime\leq k$ and for every 
$d^\prime\leq d+k-k^\prime$, there exists a 
family of plane curves $\mathcal C\to\Delta$ of degree $n$,
parametrized by a 
curve $\Delta\subset\Sigma_{k^\prime, d^\prime}^{n}$, whose special
fibre is $\mathcal C_0=C$ and whose general fibre $\mathcal C_z$ has 
$d^\prime$ nodes and $k^\prime$ cusps as singularities. 
The statement follows by applying the semicontinuity theorem to the family $\tilde{\mathcal C}\to\tilde{\Delta}$, obtained by normalizing the total space of
the pull-back family of $\mathcal C\to\Delta$ to the normalization curve
$\tilde \Delta$ of $\Delta$. \textit{Finally, it's enough
to show the theorem when the equality holds in \eqref{c},
\eqref{c1} and \eqref{n+c}}.

First of all we consider the case $k=0$. We will show the
statement for any fixed $t$ and by induction on $n$. Let, then
$t\geq 1$ and $n=t+3$. In this case the equality holds in
\eqref{n+c} if $d=1=h^0(\p,\mathcal O_{\p}).$ Since one point
imposes independent linear conditions to regular functions, by
using lemma \ref{gknpc}, we find that every irreducible plane
curve of degree $n=t+3$ with one node and no further singularities is
geometrically $t$-normal. So, the first step of the induction is
proved. Suppose, now, that the theorem is true for $n=t+3+a$ and let
$[\Gamma]\in\v$ be a point corresponding to a geometrically
$t$-normal curve with $\frac{a^2+3a+2}{2}$ nodes. Let $D$ be a
line which intersects transversally $\Gamma$ and let $P_1, ...,
P_{t+1}$ be $t+1$ marked points of $\Gamma\cap D$. If
$\Gamma^\prime =\Gamma\cup D\subset\p$, then $P_1, ...,P_{t+1}$
are nodes for $\Gamma^\prime$. Let $C\to \Gamma$ be the
normalization of $\Gamma$ and $\cp \to \Gamma^\prime$ the partial
normalization of $\Gamma^\prime $, obtained by smoothing all
singular points of $\Gamma^\prime$, except $P_1, ...,P_{t+1}$. We
have the following exact sequence of sheaves on $\cp$
\begin{equation}\label{se}
0\to\mathcal O_{D}(t)(-P_1- ...-P_{t+1})\to \mathcal
O_{\cp}(t)\to\mathcal O_{C}(t)\to 0,
\end{equation}
where $O_{\cp}(t):=\o_{\cp}(tH)$ and $H$ is the pull-back with
respect to $C^\prime\to\g^\prime$ of general
line of $\p$. Since
$
\deg(\mathcal O_D(t)(-P_1- ...-P_{t+1}))<0,
$
we get that
$$
h^0(D,\mathcal O_D(t)(-P_1- ...-P_{t+1}))=0
$$
and so
\begin{equation}\label{ea1}
h^0(\cp, \mathcal O_{\cp}(t)) =  h^0(C, \mathcal O_{C}(t))
= h^0(\p,\mathcal O_{\p}(t)).
\end{equation}
Now, by section \ref{S-E}, we can obtain $\Gamma^\prime$ as the limit of a 
$1$-parameter family 
of irreducible plane curves
$$
\psi : \mathcal C \to \Delta\subset\mathbb P^{\frac{(n+1)(n+4)}{2}}
$$
of degree $n+1=t+a+4$ with
$$
\frac{a^2+3a+2}{2}+n-t-1=\frac{(a+1)^2+3(a+1)+2}{2}=h^0(\p,\mathcal
O_{\p}(n+1-t-3))
$$
nodes specializing to nodes of $\Gamma^\prime$ different from the
marked points $P_1, ...,P_{t+1}$. Moreover, one can prove that $\Delta$ is smooth,
(see \cite{z1} or \cite{z2}). Normalizing $\mathcal C$, we obtain a family
whose general fibre is smooth and whose special fibre is exactly $\cp$, and we conclude the
inductive step by \eqref{ea1} and by semicontinuity theorem.

Now we consider the case $t=1,2$ or $3$ and $k$ as in \eqref{c}
and in \eqref{c1}. Suppose the theorem is true for $n$ and let
$\ge\in\s$ be a general point in one of the irreducible components of
$\s$. Then, let $D$ be a smooth plane
curve of degree $t$ if $t=1,\,2$ or an irreducible cubic  with a
cusp if $t=3$. By the generality of $\g$, we may suppose that $D$
intersects $\g$ transversally. Let $P_1, ...,P_{t^2+1}$ be $t^2+1$
fixed points of $\Gamma\cap D$. If $\Gamma^\prime =\Gamma\cup D$,
then $P_1, ...,P_{t^2+1}$ are nodes for $\Gamma^\prime$. Let $C\to
\Gamma$ be the normalization of $\Gamma$ and $\cp \to
\Gamma^\prime$ the partial normalization of $\Gamma^\prime$,
obtained by smoothing all singular points except $P_1,
...,P_{t^2+1}$. By using the same notation and by arguing as before,
from the following exact sequence of sheaves on
$\cp$
$$
0\to\mathcal O_{D}(t)(-P_1- ...-P_{t^2+1})\to \mathcal
O_{\cp}(t)\to\mathcal O_{C}(t)\to 0,
$$
we deduce that
\begin{equation}\label{ea2}
h^0(\cp, \mathcal O_{\cp}(t))= h^0(C, \mathcal O_{C}(t))=
  h^0(\p, \mathcal O_{\p}(t)).
\end{equation}
Now, by section \ref{S-E}, we can obtain $\Gamma^\prime$ as
limit of a family
of irreducible plane curves
$$
\phi : \mathcal C \to \Delta
$$
of degree $n+t$ with $d
+nt-t^2-1=\frac{(n+t)^2+(3+2t)(n+t)+t^2+3t+2}{2}$ nodes
specializing to nodes of $\Gamma^\prime$ different to $P_1,
...,P_{t^2+1}$, and $k+\frac{t^2-3t+2}{2}$ cusps specializing to
cusps of $\Gamma$. We conclude by \eqref{ea2} and by semicontinuity, as before. Now we have
to show the first step of the induction.  For $t=1$ the induction
begins with the cases $(n,k)=(4,1),\,(5,3),\,(6,6)$. Trivially, if
$n=4$ and $k=1$ one point imposes independent conditions to the
linear system of regular functions. If $n=5$ and $k=3$ we have to
show that there are irreducible quintics with three cusps not on a
line. A quintic with three cusps is a projection of the rational normal 
quintic $C_5\subset\mathbb P^5$ from a plane generated by three points lying on three different tangent lines to $C_5$.
By Bezout theorem the three cusps of such a
plane curve can't be aligned. If $n=k=6$, one can repeat the classical
argument used by Zariski, see \cite{z1} or
example 3.20 of chapter 2 of \cite{tesi}. For $t=2$ we
have to show the theorem for
$(n,k)=(5,1),\,(6,3),\,(7,6),\,(8,6)$, while for $t=3$
we have to show the theorem for $(n,k)=(6,1),\,(7,3),\,(8,6),\,(9,6),\,(10,6)$.
The case $t=2$ and $(n,k)=(5,1)$ is trivial.
When $t=2$, $n=6$ and $k=3$ we have that $n-3-t=1$. To show that there exists an
irreducible sextic with three cusps not on a line, consider a
rational quartic $C_4$ with three cusps, (see corollary 3.18
of chapter 2 of \cite{tesi} for the existence). By
Bezout theorem, the three double points of $C_4$ can't be aligned.
Then consider a sextic $C_6$ which is union of $C_4$ and a conic
$C_2$ which intersects $C_4$ transversally. By section \ref{S-E},
one can smooth the intersection
points of $C_4$ and $C_2$ obtaining a family of sextics with three cusps not on a line. 
For $t=2$, $n=7$ and $k=6$ we argue as in the previous case,
by using a sextic $C_6$ with six cusps not on a conic and a
line $R$ with intersects $C_6$ transversally. Similarly for $t=2$ , $n=8$
and $k=6$ and $t=3$ and $(n,k)=(6,1),\,(7,3),\,(8,6),\,(9,6),\,(10,6).$
\end{proof}
\begin{example}\label{cc1}
Inequalities \eqref{c} and \eqref{c1} are not sharp. To see this, we can
consider the example of curves of degree $10$. We recall that we say that
a plane curve is geometrically linearly normal (g.l.n. for short) if it is 
geometrically $1$-normal. Theorem
\ref{gknnc} ensures the existence of g.l.n.
irreducible plane curves of degree $10$ with $k\leq 6$ cusps and
nodes as singularities. But, by using the same ideas
as we used in theorem \ref{gknnc}, one can prove the existence of
g.l.n. plane curves of degree $10$ with nodes and
$k\leq 9$ cusps. It is enough to consider a sextic $\g_6$ with six
cusps not on a conic and a rational quartic $\g_4$ with three cusps
intersecting $\g_6$ transversally.
We choose five points $P_1,\dots,\,P_5$ of $\g_4\cap \g_6$. If
$\g^\prime_6$ and $\g^\prime_4$ are the normalization curves of
$\g_6$ and $\g_4$ respectively and $C^\prime$ is the partial
normalization of $\g_6\cup\g_4$ obtained by normalizing all its
singular points except $P_1,\dots,\,P_5$, by considering the
following exact sequence
$$
0\to\o_{\g^\prime_4}(1)(-P_1-\dots-P_5)\to\o_{C^\prime}(1)\to\o_{\g_6^\prime}(1)\to
0$$ we find that $h^0(C^\prime,\o_{C^\prime}(1))=3$. 
By using terminology of section \ref{S-E}, the statement
follows by smoothing the singular points $P_1,\dots,\,P_5$ of $\g_6\cup\g_4$,
and by semicontinuity, as in the proof of theorem \ref{gknnc}. The bound on the number of cusps of theorem \ref{gknnc} can be
improved also for $t=2$ or $t=3$. For example, theorem \ref{gknnc}
ensures the existence of geometrically $3$-normal curves of degree
$12$ with $k\leq 6$ and nodes as further singularities. But, by
considering a geometrically $3$-normal curve of degree $8$ with
six cusps and a quartic with $3$ cusps and arguing as before, we
can find geometrically $3$-normal irreducible plane curves of degree
$12$ with nodes and $k\leq 9$ cusps.
\end{example}

\section[On the number of moduli...]{Families of plane curves with nodes and
cusps with finite and expected number of moduli.}\label{sectionmoduli} 
Let $\Sigma\subset\s$ be an irreducible component of $\s$.
We want to give sufficient conditions for $\Sigma$ to 
have the expected number of moduli.
Let $[\g]\in\Sigma$ be a general element,
corresponding to a plane curve $\g$ with normalization map
$\phi:C\to\g$.  We shall denote by $\omega_C$ 
the canonical sheaf of $C$ and by $\o_C(1)$ the sheaf associated to the
pullback to $C$ of the divisor cut out on $\g$
from the general line of $\p$. 
\begin{proposition}\label{bnm}
Let $\Sigma\subset\s$ be an irreducible
component of $\s$ and let $[\g]\in\Sigma$ be a general element,
corresponding to a plane curve $\g$ with normalization map
$\phi:C\to\g$. Suppose that $\Sigma$ is smooth of the expected dimension equal 
to $3n+g-1-k$ at $\ge$. Moreover, suppose that:
\begin{enumerate}
\item $\g$ is geometrically linearly normal, i.e. $h^0(C,\o_C(1))=3$,\label{bnma}\\
\item the Brill-Noether map
$$
\mu_{o,C}:H^0(C,\o_C(1))\otimes H^0(C,\omega_C(-1))\to H^0(C,\omega_C)
$$
is surjective. \label{bnmb}
\end{enumerate}

Then $\Sigma$ has the expected number of moduli equal to
$3g-3+\rho-k$.
\end{proposition}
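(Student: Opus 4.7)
The plan is to combine the upper bound on the number of moduli from Lemma \ref{lemmaubnm} with a matching lower bound obtained by bounding the dimension of the generic fibre of $\Pi_\Sigma$.

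First I would apply Lemma \ref{lemmaubnm}: the assumption that $\Sigma$ has the expected dimension $3n+g-1-k$ forces $k<3n$, so the lemma yields
$$
\dim(\Pi_\Sigma(\Sigma))\leq\min(\dim(\m_g),\,\dim(\m_g)+\rho-k).
$$
As a byproduct of the tangent-space calculation below, hypotheses (\ref{bnma}) and (\ref{bnmb}) will force $\rho\leq 0$, so this upper bound coincides with $3g-3+\rho-k$.

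For the matching lower bound, I would follow the description of the generic fibre of $\Pi_\Sigma$ given in the proof of Lemma \ref{lemmaubnm}. Writing $[g^2_n]$ for the point of $G^2_n(C)$ corresponding to the complete linear system of the normalization map $\phi:C\to\g$, the fibre of $\Pi_\Sigma$ through $[\g]$ is an open subset of a $PGL_3$-bundle over the locally closed subscheme $G^2_{n,k}(C)\subset G^2_n(C)$, modulo the finite group $Aut(C)$ (finite since $g\geq 2$). Thus it suffices to show that $[g^2_n]$ is an isolated point of $G^2_n(C)$: this will bound the generic fibre of $\Pi_\Sigma$ by $\dim(PGL_3)=8$ and yield
$$
\dim(\Pi_\Sigma(\Sigma))\geq\dim(\Sigma)-8=3g-3+\rho-k,
$$
matching the upper bound.

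For the isolation, the essential tool is the tangent-space formula from item (2) of the Brill--Noether preliminaries: $\dim T_{[g^2_n]}G^2_n(C)=\rho+\dim\ker\bnm$. Hypothesis (\ref{bnmb}) makes $\bnm$ surjective, so $\dim\ker\bnm=3\,h^0(C,\omega_C(-1))-g$; hypothesis (\ref{bnma}) together with Riemann--Roch and Serre duality yields $h^0(C,\omega_C(-1))=g-n+2$. Substituting,
$$
\dim T_{[g^2_n]}G^2_n(C)=\rho+3(g-n+2)-g=\rho+(-\rho)=0,
$$
so $[g^2_n]$ is indeed isolated in $G^2_n(C)$ (and incidentally $\rho\leq 0$, as promised).

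The main point---and essentially the only nontrivial step---is this tangent-space computation: once one sees that the two hypotheses conspire to make $\dim\ker\bnm$ cancel $\rho$ exactly, the rest is a dimension count. The only subtle piece of bookkeeping is the identification of the $8$-dimensional $PGL_3$-ambiguity with the generic fibre of $\Pi_\Sigma$ over its image in $\m_g$, which is already carried out in the proof of Lemma \ref{lemmaubnm}.
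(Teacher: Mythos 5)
Your proof is correct, and it reaches the conclusion by a genuinely different route from the paper's. The paper works on the source side of the moduli map: it invokes Tannenbaum to get $H^1(C,\fn)=0$ from smoothness in the expected dimension, Horikawa's theorem to produce the universal deformation space $B$ of $\phi$ with tangent space $H^0(C,\fn)$, and Arbarello--Cornalba's identification of $T_{\ge}\Sigma$ with a codimension-$k$ subspace $W\subset H^0(C,\fn)$ transverse to the torsion of $\fn$; the number of moduli is then computed as the rank of the coboundary map $\delta_C$ restricted to $W$, using $H^1(C,\phi^*\Theta_{\p})\simeq(\ker\bnm)^*$. You instead work on the fibre side, combining the upper bound of lemma \ref{lemmaubnm} with the Brill--Noether tangent-space formula $\dim T_{[g^2_n]}G^2_n(C)=\rho+\dim\ker\bnm$ to show that the relevant $g^2_n$ is isolated, so that the generic fibre of $\Pi_\Sigma$ is exactly the $8$-dimensional frame ambiguity. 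The numerical heart is identical in both arguments --- hypotheses (\ref{bnma}) and (\ref{bnmb}) force $\dim\ker\bnm=3(g-n+2)-g=-\rho$ --- but your packaging avoids the deformation theory of maps entirely, at the price of leaning on the fibre description already set up in the proof of lemma \ref{lemmaubnm}; the paper's route yields slightly more (maximal rank of the differential of $\Pi_{\v}\circ F$ at $0$), though this extra information is not needed for the dimension count. One small correction: the expected dimension of $\Sigma$ does not ``force'' $k<3n$ --- the implication runs the other way --- but this is harmless, since remark \ref{remarkubnm} notes that the bound of lemma \ref{lemmaubnm} holds whenever $\Sigma$ has the expected dimension, which is exactly your hypothesis.
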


\begin{proof}
The case $k=0$ has been proved by Sernesi in \cite{ser}, section 4.
We shall assume $k>0$.
Let $\g$ be a plane curve verifying the hypotheses of the proposition.
By lemma 1.5.(b) of \cite{ta}, the hypothesis that $\Sigma$ is smooth
of the expected dimension at $\ge$ implies the vanishing $H^1(C,\fn)=0$, where $\fn$ if 
the normal sheaf of $\phi$. We recall that, denoting by $\Theta_{C}$ and 
$\Theta_{\p}$ the tangent sheaf of $C$ and $\p$ respectively, then the 
normal sheaf of $\phi$ is defined as the cokernel of the differential map $\phi_*$ of $\phi$
\begin{equation}\label{ns}
 0 \rightarrow \Theta_{C} \stackrel{\phi_*}\rightarrow  \phi^*\Theta_{\p}
\rightarrow { \mathcal N}_\phi  \rightarrow  0
\end{equation}

\noindent 
By theorem 3.1 of \cite{hor1}, the
vanishing $H^1(C,\fn)=0$ is a sufficient condition for the
existence of a universal deformation family
$$\xymatrix{
 \mathcal C\ar[r]^{\tilde\phi}\ar[d]_\pi & \p \\
B&}$$
of the normalization map $\phi$, whose parameter space $B$ is smooth
at the point $0$ corresponding to $\phi$, with tangent space at $0$ 
equal to $H^0(C,\,\fn)$. On the contrary, by \cite{ac3}, p. 487,
the Severi variety $\v=\Sigma^{n}_{0,k+d}$
of irreducible plane curves of genus $g=\pa-d-k$ is singular at the point
$\ge$ and the universal deformation space $B$ of $\phi$ is a desingularization
of $\v$ at $\ge$. Moreover, by corollary 6.11 of \cite{ac2},
if $B_k=F^{-1}(\Sigma)$ is the
locus of points of $B$ corresponding to a morphism with $k$
ramification points, then the tangent space to $B_k$
at $0$ is a subspace $W$ of $H^0(C,{ \mathcal
N}_\phi)$ of codimension $k$ such that $W\cap H^0(C, \k)=0$, where
$\k$ is the torsion subsheaf of $\fn$. By \cite{ac3}, p.487,
it follows that, if
$$
F:B\to\v
$$
is the natural $(1:1)$-map from $B$ to $\v$, 
then the differential map $$dF:H^0(C,\,\fn)\to T_{\ge}\v$$
restricts to an isomorphism  between $W$ and the
tangent space $T_{\ge}\Sigma$ to $\Sigma$ at $\ge$.

We can now go back to the number of moduli of $\Sigma$. From the
exact sequence \eqref{ns}, by using that $H^1(C,\,\fn)=0$, we get the 
following long exact sequence
$$
 0 \rightarrow H^0(C,\Theta_{C})\rightarrow H^0(C, \phi^*\Theta_{\p})
\rightarrow H^0(C,{ \mathcal N}_\phi)\stackrel {\delta_C} \rightarrow H^1(C, \Theta_{C})
\rightarrow H^1(C, \phi^*\Theta_{\p})\rightarrow  0
$$
Recalling that the space $H^1(C,
\Theta_{C})$ is canonically identified with the tangent space
$T_{[C]}\m_g$ to $\m_g $ at the point associated to the
normalization $C$ of  $\g$, the coboundary map $\delta_C:H^0(C,{
\mathcal N}_\phi)\rightarrow H^1(C, \Theta_{C})$ sends the Horikawa
class  of an infinitesimal deformation of $\phi$ to the 
Kodaira-Spencer class of the corresponding infinitesimal deformation of
$C$. So, $\d_C|W$ is the differential map at the point $0\in B$ of
the moduli map
$\Pi_{\Sigma}\circ F:B_k=F^{-1}(\Sigma)\dashrightarrow\m_g.$ Since the
point $[\g]$ is general in $\Sigma$, and recalling the isomorphism
$dF:W\stackrel{\backsim}\rightarrow T_{\ge}\Sigma$, we have that
$$
\textit{the number of moduli of $\Sigma$}=\dim (\d_C(W)).
$$
Now, from the exact sequence \eqref{ns}, we have that
$$
\dim (\d_C( H^0(C,{ \mathcal N}_\phi))=3g-3-h^1(C, \phi^*\Theta_{\p}).
$$
Moreover, from the pull-back to $C$ of the Euler exact sequence,
we deduce the well known isomorphism
$$H^1(C, \phi^*\Theta_{\p})\simeq\textrm{coker}(\mu_{0,C}^*)\simeq
(\ker(\mu_{0,C}))^*$$
and we conclude that
\begin{equation}\label{mrmm}
\dim (\d_C( H^0(C,{ \mathcal N}_\phi)))=3g-3-\dim (\ker(\mu_{0,C})).
\end{equation}
Notice that the previous equality is always true, even if $\g$
doesn't verify the hypothesis $(1)$ or $(2)$ of the statement. Moreover, if $\g$
is geometrically linearly normal, i.e. if $h^0(C,\o_C(1) )=3$, we
have that
$$
\rho=3n-2g-6=\dim (\textrm{coker} (\bnm))-\dim (\ker (\bnm)).
$$
When $\bnm$ is surjective, $\rho=-\dim ( \ker (\bnm))$ and
\begin{equation}\label{thesis}
\dim (\d_C( H^0(C,{ \mathcal N}_\phi))=3g-3+\rho=\dim (B)-8=\dim (\v)-8.
\end{equation}
Since the dimension of the fibre of the moduli map
$$\Pi_{\v}\circ F: B\dashrightarrow\m_g$$ has dimension at least equal to $8=dim(Aut(\p))$,
from \eqref{thesis} we deduce that the differential map of
$\Pi_{\v}\circ F$ has maximal rank at $0$ and, in
particular, we have that $\dim ((\Pi_{\v}\circ F)^{-1}([C]))=8$.
Equivalently, there exist only finitely many $g^2_n$ on $C$. It
follows that there are only finitely many $g^2_n$ on $C$ mapping
$C$ to the plane as a curve with $k$ cusps and $d$ nodes.
Then,
$$\dim(\delta_c(W))=\dim(\Pi_{\Sigma}(\Sigma)) =3g-3+\rho-k.$$

\end{proof}

\begin{remark}
Arguing as in the proof of the previous proposition, it has been
proved in \cite{ser} that, if\, $\g$ is a geometrically linearly normal
plane curve with only $d$ nodes as singularities and the
Brill-Noether map $\bnm$ of the normalization morphism of $\g$ is
injective, then $\Sigma=\Sigma_{0,d}^n$ has general moduli. If $\Sigma\subset\s$
and $\ge\in\Sigma$ verify the hypotheses of proposition \ref{bnm} but
we assume that $\bnm$ is injective, we may
only conclude that $\Pi_{\v}\circ F$ is dominant with
surjective differential map at $\ge$. So
$\dim(\Pi^{-1}_{V_{n,g}}([C]))=\rho+8$. But this is not useful to
compute the dimension of $\d_C(W)=\d_C(T_{\ge}\Sigma)$. However,
in this case we get that
$$
\d_C(T_{\ge}\Sigma)+\d_C(H^0(C,\k))=
\d_C(H^0(C,\fn))=H^1(C,\Theta_C).
$$
Then, by using that $\dim(\d_C(H^0(C,\k)))\leq k$ and by recalling
that if $\Sigma$ has the expected dimension then the number of moduli of $\s$ is at most the
expected one (see lemma \ref{lemmaubnm} and remark \ref{remarkubnm}), we find that
$$
3g-3-k\leq \textit{number of moduli of } \Sigma\leq 3g-3+\rho-k.
$$
\end{remark}

\begin{remark}
Notice that, if a plane curve $\g$ of genus $g$ verifies the
hypotheses \eqref{bnma} and \eqref{bnmb} of the previous
proposition, then the Brill-Noether number $\rho(2,g,n)$ is not
positive and, in particular, $g\geq 3$. We don't know examples of
complete irreducible families $\Sigma\subset \s$ with the expected
number of moduli whose general element $[\g]$ corresponds to a
curve $\g$ of genus $g$, with $\rho(2,g,n)\leq 0$, which doesn't
verify properties \eqref{bnma} and \eqref{bnmb}. 
\end{remark}

\begin{lemma}[\cite{ag}, Corollary 3.4]\label{ag}
Let $\g$ be an irreducible plane curve of degree $n$ with only
nodes and cusps as singularities and let $\phi:C\to\g$ be the
normalization morphism of $\g$. Suppose that $\g$ is geometrically
$2$-normal, i.e. $h^0(C,\o_C(2))=6$. Then the Brill-Noether map
$$
\mu_{o,C}:H^0(C,\mathcal O_C(1))\otimes H^0(C,\omega_C(-1))\to
H^0(C,\omega_C)
$$
is surjective.
\end{lemma}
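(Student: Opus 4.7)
My plan is to reduce the surjectivity of $\bnm$ to a Castelnuovo--Mumford regularity statement for the ideal sheaf in $\p$ of the singular locus of $\g$. Let $Z\subset\p$ denote the reduced singular scheme of $\g$, of length $d+k$. Arguing exactly as in the proof of lemma \ref{gknpc} (via the adjoint formula $\omega_C\cong\phi^*\o_\p(n-3)\otimes\o_C(-\Delta)$ together with the local computation at a cusp preimage that identifies vanishing of $\phi^*f$ to order $\geq 2$ at $t=0$ with the passage of $f$ through the cusp), one obtains canonical isomorphisms
$$H^0(C,\omega_C)\cong H^0(\p,\mathcal I_Z(n-3)),\qquad H^0(C,\omega_C(-1))\cong H^0(\p,\mathcal I_Z(n-4)).$$
The first isomorphism is automatic by a dimension count (both sides equal $g=\binom{n-1}{2}-d-k$); the second requires that $Z$ impose independent linear conditions on plane curves of degree $n-4$, that is, the geometric $1$-normality of $\g$, which follows from the hypothesis $h^0(C,\o_C(2))=6$ by remark \ref{kimplies}. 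Under these identifications, the Brill--Noether map $\bnm$ becomes the natural multiplication of graded pieces of the homogeneous ideal of $Z$,
$$H^0(\p,\o_\p(1))\otimes H^0(\p,\mathcal I_Z(n-4))\longrightarrow H^0(\p,\mathcal I_Z(n-3)).$$

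By Castelnuovo--Mumford regularity, this multiplication map is surjective as soon as $\mathcal I_Z$ is $(n-4)$-regular, i.e.\ as soon as $H^1(\p,\mathcal I_Z(n-5))=0$ and $H^2(\p,\mathcal I_Z(n-6))=0$. The second vanishing is routine: from the short exact sequence $0\to\mathcal I_Z\to\o_\p\to\o_Z\to 0$, combined with $H^1(\o_Z(n-6))=0$ (since $Z$ is zero-dimensional) and $H^2(\p,\o_\p(n-6))=0$ (for $n\geq 4$), we obtain $H^2(\p,\mathcal I_Z(n-6))=0$. The first vanishing, $H^1(\p,\mathcal I_Z(n-5))=0$, means exactly that the points of $Z$ impose independent linear conditions on plane curves of degree $n-5$, which by lemma \ref{gknpc} is equivalent to the hypothesis $h^0(C,\o_C(2))=6$ of geometric $2$-normality. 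Surjectivity of $\bnm$ follows.

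The main non-mechanical point is the adjoint-theoretic identification $H^0(\omega_C)\cong H^0(\p,\mathcal I_Z(n-3))$: one must verify in local coordinates at each cusp preimage that the multiplicity-$2$ contribution of the cusp to $\Delta$ translates, after pull-back by $t\mapsto(t^2,t^3)$, into a single condition ``$f(q)=0$'' on adjoint polynomials, with no further tangent-direction condition imposed. Once this is settled (exactly as in the proof of lemma \ref{gknpc}), the Castelnuovo--Mumford argument is routine, and the hypothesis $h^0(C,\o_C(2))=6$ enters precisely where needed: it is exactly the cohomology vanishing that supplies the $(n-4)$-regularity of $\mathcal I_Z$, and hence the surjectivity of the graded ideal-multiplication map coinciding with $\bnm$ under the above identifications.
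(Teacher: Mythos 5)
Your proof is correct and follows essentially the same route as the paper's: both reduce the statement to the $0$-regularity (in the sense of Castelnuovo--Mumford) of $\mathcal I_{Z|\p}(n-4)$, observe via lemma \ref{gknpc} that the needed vanishing $h^1(\p,\mathcal I_{Z|\p}(n-5))=0$ is exactly the geometric $2$-normality hypothesis, and then transport the surjectivity of the graded multiplication map on the ideal of the singular scheme to $\bnm$ through the adjoint identifications (the paper phrases this as a commutative diagram with surjective vertical restriction maps rather than isomorphisms, but the content is identical). The only cosmetic difference is that you spell out the adjoint-theoretic identifications and the local analysis at a cusp a bit more explicitly than the paper does.
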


\begin{proof}
By lemma \ref{gknpc}, the curve $\g$ is geometrically $2$-normal
if and only if the scheme $N$ of the singular points of $\g$
imposes independent linear conditions to the linear system
$H^0(\p,\o_{\p}(n-5))$ of plane curves of degree $n-5$. 
Since $H^0(\p,\o_{\p}(n-5))\subset H^0(\p,\o_{\p}(n-4))$,
$N$ imposes independent linear conditions plane curves of degree $n-4$, and,
by using lemma \ref{gknpc}, we get that
$h^0(C,\o_C(1))=3$, i.e. $\g$ is geometrically linearly normal.
Now, denote by $\mathcal I_{N|\p}$ the ideal sheaf of $N$. Notice 
that the curve $\g$ is geometrically
2-normal if and only if the ideal sheaf $\mathcal I_{N|\p}(n-4)$
is $0$-regular, (in the sense of Castelnuovo-Mumford). Indeed, since $h^2(\p,\mathcal I_{N|\p}(n-6))=0$,
the ideal sheaf $\mathcal I_{N|\p}(n-4)$ is $0$-regular if and
only if $h^1(\p,\mathcal I_{N|\p}(n-5))=0$. Because of the $0$-regularity of
$\mathcal I_{N|\p}(n-4)$, we have the surjectivity
of the natural map
$$
H^0(\mathbb P^2,\mathcal I_{N|\p}(n-4))\otimes H^0(\mathbb P^2,
\mathcal{O}_{\mathbb P^2}(1)) \rightarrow H^0(\mathbb P^2,
\mathcal I_{N|\p}(n-3)),
$$
(see \cite{mumford}).
Finally, by the geometric linear normality 
of $\g$, the vertical maps of the following commutative diagram
$$\xymatrix{
H^0(\mathbb P^2,\mathcal{O}_{\mathbb P^2}(1))\otimes
H^0(\mathbb P^2,\mathcal I_{N|\p}(n-4))\ar[r]\ar[d] &
H^0(\mathbb P^2,\mathcal I_{N|\p}(n-3))\ar[d] \\
H^0(C,\mathcal O_C(1))\otimes H^0(C,\omega_C(-1)) \ar[r]^-{\mu_{o,C}}
& H^0(C, \omega_C)}$$
are surjective and, hence, the Brill-Noether
map $\bnm$ is surjective too.
\end{proof}

\begin{corollary}\label{g2n}
Let $\Sigma\subset\s$ be an irreducible
component of $\s$ of dimension equal to $3n+g-1-k$, 
such that the general point $[\g]\in\Sigma$
corresponds to a geometrically $2$-normal plane curve. Then
$\Sigma$ has the expected number of moduli equal to $3g-3+\rho-k$.
\end{corollary}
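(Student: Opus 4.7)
The plan is to verify the three hypotheses of Proposition \ref{bnm} and then invoke it directly; the corollary reads as a convenient repackaging of that proposition, since geometric $2$-normality subsumes both the linear normality condition and the surjectivity condition on the Brill-Noether map required there.

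Fix an irreducible component $\Sigma\subset\s$ of the expected dimension $3n+g-1-k$, let $\ge\in\Sigma$ be a general point, and let $\phi:C\to\g$ be the normalization of the corresponding geometrically $2$-normal plane curve $\g$. First, for the smoothness hypothesis of Proposition \ref{bnm}, I observe that since $\Sigma$ is an irreducible complex variety of dimension exactly $3n+g-1-k$ and $\ge$ is a general point of $\Sigma$, the variety $\Sigma$ is automatically smooth at $\ge$ of that dimension, because the smooth locus of an irreducible complex variety contains a dense open subset. Second, for the geometric linear normality $h^0(C,\o_C(1))=3$, I appeal to Remark \ref{kimplies}: geometric $t$-normality implies geometric $r$-normality for every $r\leq t$, so geometric $2$-normality forces geometric $1$-normality. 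Third, for the surjectivity of the Brill-Noether map $\bnm$, this is precisely the content of Lemma \ref{ag}.

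Having verified all three hypotheses, Proposition \ref{bnm} yields that $\Sigma$ has the expected number of moduli equal to $3g-3+\rho-k$, which is the conclusion. There is no genuine obstacle here: the real mathematical content has already been carried out in Proposition \ref{bnm} and in Lemma \ref{ag}, and the corollary is a bookkeeping statement observing that the single hypothesis of geometric $2$-normality on the general element of $\Sigma$ is strong enough to trigger both ingredients of Proposition \ref{bnm} simultaneously. The only point that could be mistaken for a subtlety is the smoothness of $\Sigma$ at $\ge$; but since the expected dimension is assumed attained and $\ge$ is general in an irreducible variety, this follows with no further work.
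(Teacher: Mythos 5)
Your proof is correct and follows exactly the paper's route: the paper's own proof of this corollary is the one-line observation that it follows from Proposition \ref{bnm} and Lemma \ref{ag}, which is precisely what you carry out (with Remark \ref{kimplies} supplying the linear normality and genericity of $\ge$ supplying the smoothness). Your more explicit verification of the three hypotheses, including the smoothness of $\Sigma$ at a general point, is a faithful unpacking of what the paper leaves implicit.
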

\begin{proof}
It follows from proposition \ref{bnm} and lemma \ref{ag}.
\end{proof}
In order to produce examples of families of irreducible plane curves with
nodes and cusps with the expected number of moduli, we study how
increases the rank of the Brill-Noether map by smoothing a node or
a cusp of the general curve of the family, (in the sense of section \ref{S-E}).

Let $\Sigma\subset\s$, with $n\geq 5$, be an irreducible component
of $\s$, let $\ge\in\Sigma$ be a general point of $\Sigma$ and let
$\phi:C\to\g$ be the normalization of $\g$. 
Choose a singular point
$P\in\g$ and denote by $\phi^\prime:C^\prime\to\g$ the partial
normalization of $\g$ obtained by smoothing all singular points of
$\g$, except the point $P$. If $\omega_{C^\prime}$ is the
dualizing sheaf of $C^\prime$ and

$$
\mu_{o,C^\prime}:H^0(\cp,\o_{\cp}(1))\otimes H^0(\cp,\omega_{\cp}(-1))
\to H^0(\cp,\omega_{\cp}),
$$
is the natural multiplication map, we have the following result.

\begin{lemma}\label{rank}
If $h^0(C,\o_C(1))=3$ and the geometric genus $g$ of $C$ is such
that $g>n-2$, with $n\geq 5$, then $rk( \mu_{o,C^\prime})\geq
rk(\bnm)+1$. In particular, if $h^0(C,\o_C(1))=3$, $n\geq 5$ and
$\bnm$ is surjective, then $\mu_{o,C^\prime}$ is also surjective.
\end{lemma}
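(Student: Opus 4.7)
The plan is to compare $\mu_{o,C}$ and $\mu_{o,C'}$ through the normalization morphism $\nu\colon C\to C'$, which is an isomorphism away from the preimage of $P$. Writing $\mathcal{T}$ for the cokernel of $\o_{C'}\to\nu_*\o_C$, I would start from the two short exact sequences
\begin{equation*}
0\to \o_{C'}\to \nu_*\o_C\to \mathcal{T}\to 0,\qquad
0\to \nu_*\omega_C\to \omega_{C'}\to \mathcal{T}'\to 0,
\end{equation*}
where the second is obtained from the first by applying $\mathcal{H}om(-,\omega_{C'})$ with $\mathcal{T}':=\mathcal{E}xt^1(\mathcal{T},\omega_{C'})$; both $\mathcal{T}$ and $\mathcal{T}'$ are length-$1$ torsion sheaves supported at $P$.

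First I would check that the hypothesis $h^0(C,\o_C(1))=3$ propagates to $C'$, i.e.\ that $h^0(C',\o_{C'}(1))=3$. Indeed, under the hypothesis every section of $\o_C(1)$ is the pullback of a linear form $\ell$ on $\p$, and $\ell\circ\phi$ has vanishing linear term in the local parameter on $C$ at every preimage of $P$ (the two branch values agree at a node, while $d\phi$ vanishes at a cusp), so every such section descends to $C'$. Twisting the two sequences above by $\o_{C'}(\pm 1)$, taking cohomology, and using Serre duality then yields codimension-$1$ inclusions
\begin{equation*}
H^0(C,\omega_C)\subsetneq H^0(C',\omega_{C'}), \qquad W:=H^0(C,\omega_C(-1))\subsetneq W':=H^0(C',\omega_{C'}(-1)).
\end{equation*}
With $V:=H^0(C',\o_{C'}(1))\cong H^0(C,\o_C(1))$, the map $\mu_{o,C'}$ restricts on $V\otimes W$ to $\mu_{o,C}$ (viewed inside $H^0(C,\omega_C)\subset H^0(C',\omega_{C'})$), so $\operatorname{Im}(\mu_{o,C})\subseteq \operatorname{Im}(\mu_{o,C'})\cap H^0(C,\omega_C)$.

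The core step is to produce a single element of $\operatorname{Im}(\mu_{o,C'})$ lying outside $\operatorname{Im}(\mu_{o,C})$. I would pick any $\tau_0\in W'\setminus W$ and any $\sigma\in V$ with $\sigma(P)\neq 0$ (such $\sigma$ exists since $V$ is base-point-free), and argue that $\sigma\tau_0\notin H^0(C,\omega_C)$ by a local computation at the preimage of $P$. If $P$ is a node with preimages $p_1,p_2$, then $\tau_0$ pulls back to a differential with simple poles of residues $r_1=-r_2\neq 0$, so $\sigma\tau_0$ has residues $\sigma(P)r_i$ at $p_i$, and regularity forces $\sigma(P)=0$. If $P$ is a cusp with preimage $p$ and local uniformizer $t$, then $\o_{C',P}\subset\mathbb{C}\{t^2,t^3\}$ so $\sigma$ has vanishing linear term in $t$, while $\tau_0$ has a genuine $a_0\,t^{-2}\,dt$ leading part (with $a_0\neq 0$ and vanishing residue automatic); the product expansion gives $\sigma\tau_0=\sigma(P)a_0\,t^{-2}\,dt+\text{regular}$, so again regularity forces $\sigma(P)=0$. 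Since we chose $\sigma(P)\neq 0$, we conclude $\operatorname{Im}(\mu_{o,C'})\supsetneq\operatorname{Im}(\mu_{o,C})$ and thus $\operatorname{rk}(\mu_{o,C'})\geq \operatorname{rk}(\mu_{o,C})+1$.

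The ``in particular'' statement is then immediate: surjectivity of $\mu_{o,C}$ gives $\operatorname{rk}(\mu_{o,C})=g$, hence $\operatorname{rk}(\mu_{o,C'})\geq g+1=h^0(C',\omega_{C'})$, so $\mu_{o,C'}$ is surjective as well. I expect the main technical obstacle to be the cusp case of the local computation: there the residue theorem forces vanishing of the $t^{-1}$ coefficient, so the effective obstruction to regularity is hidden in the $t^{-2}$ coefficient and can be detected only because pullbacks from $C'$ have no linear term in $t$. The node case, by contrast, reduces to a transparent residue computation.
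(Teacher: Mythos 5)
Your proof is correct, but it takes a genuinely different route from the paper's. The paper also reduces to the identifications $H^0(\cp,\omega_{\cp})\simeq H^0(C,\omega_C(\phi^*(P)))$ and $H^0(\cp,\omega_{\cp}(-1))\simeq H^0(C,\omega_C(\phi^*(P))(-1))$, but then argues projectively: it uses the hypotheses $n\geq 5$ and $g>n-2$ to see that $\o_C(1)$ is special and $C$ is non-hyperelliptic, so that $|\omega_C|$ embeds $C$ in $\mathbb P^{g-1}$ while $|\omega_C(\phi^*(P))|$ maps $C$ to $\mathbb P^g$ contracting $\phi^*(P)$ to a singular point $Q$; the canonical series is cut out by hyperplanes through $Q$, and a dimension count ($h^0(\omega_C(\phi^*(P))(-1))=h^0(\omega_C(-1))+1$, so $\phi^*(P)$ is not a base point of the bigger series) shows that $Q$ does not lie in the base locus $B$ of the span of $\mathrm{Im}(\mu'_{o,C})$, whence $rk(\bnm)\leq g-1-\dim(B)=rk(\mu_{o,C^\prime})-1$. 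You instead produce the ``extra'' element of $\mathrm{Im}(\mu_{o,C^\prime})$ explicitly: picking $\tau_0\in W'\setminus W$ and $\sigma$ with $\sigma(P)\neq 0$, you verify by the local description of the Rosenlicht differentials at a node (opposite nonzero residues on the two branches) and at a cusp (nonzero leading coefficient of $t^{-2}\,dt$, no linear terms in $\o_{\cp,P}$) that $\sigma\tau_0$ retains a genuine pole along $\phi^*(P)$, hence lies outside $H^0(C,\omega_C)\supseteq\mathrm{Im}(\bnm)$. Your argument is more elementary and local: it never invokes non-hyperellipticity or the canonical embedding, and in fact it uses the numerical hypotheses $n\geq 5$, $g>n-2$ only implicitly (to guarantee the spaces involved behave as stated), whereas the paper's proof needs them in an essential way. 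What the paper's approach buys is a uniform synthetic picture (the singular point $Q$ of the image in $\mathbb P^g$ and the base-locus comparison); what yours buys is transparency about exactly which product fails to descend, at the cost of a case-by-case local computation at the node and at the cusp. Both proofs establish the same inequality and the same ``in particular'' conclusion.
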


\begin{proof} Let $\psi:C\to\cp$ be
the normalization map.

$$\xymatrix{
C\ar[r]^{\psi}\ar[dr]_\phi& \cp\ar[d]^{\phi^\prime} \\
&\g}$$

\noindent We recall that, if we set $\phi^*(P):=p_1+p_2$ when $P$
is a node and $\phi^*(P)=2\phi^{-1}(P)$ when $P$ is a cusp, then
the dualizing sheaf of $C^\prime$ is a subsheaf of
$\psi_*(\omega_C(\phi^*(P)))$,  (see
for example \cite{eh2}, p.80).
In particular we have the following exact sequence
\begin{equation}\label{dualizingsheaf}
0\to\omega_{\cp}\to\psi_*\omega_C(\phi^*(P))\to\mathbb C_P\to 0
\end{equation}
where $\mathbb C_P$ is the skyscraper sheaf on $C$ with support at
$P$. From this exact sequence, we deduce that
$$H^0(\cp,\omega_{\cp})\simeq H^0(C,\omega_C(\phi^*(P))).$$
Moreover, tensoring \eqref{dualizingsheaf} by $\o_{\cp}(-1)$, we
find the exact sequence

\begin{equation}
0\to\omega_{\cp}(-1)\to\psi_*\omega_C(\phi^*(P))(-1)\to\mathbb
C_P\to 0
\end{equation}
from which we get an injective map $H^0(\cp,\omega_{\cp}(-1))\to
H^0(C, \omega_C(\phi^*(P))(-1)).$ On the other hand
\begin{equation}\label{fattoprima}
h^0(\cp,\omega_{\cp}(-1))=h^0(C, \omega_C(\phi^*(P))(-1))=g-n+3
\end{equation}
and so $$H^0(\cp,\omega_{\cp}(-1))\simeq H^0(C,
\omega_C(\phi^*(P))(-1)).$$ Moreover, from the hypothesis
$h^0(C,\o_C(1))=3$, we have that $H^0(C,\o_C(1)) \simeq  H^0(\cp,
\o_{\cp}(1))\simeq H^0(\p,\o_{\p}(1))$. Therefore, in the
following commutative diagram

$$\xymatrix{
H^0(\cp,\o_{\cp}(1))\otimes H^0(\cp,\omega_{\cp}(-1))
\ar[r]^-{\mu_{o,C^\prime}}\ar[d]& H^0(\cp,\omega_{\cp})\ar[d]\\
H^0(C,\o_C(1))\otimes H^0(C,\omega_C(-1)(\phi^*(P)))
\ar[r]^-{\mu_{o,C}^\prime}& H^0(C,\omega_C(\phi^*(P)))}$$
\noindent
where we denoted by $\mu_{o,C}^\prime$ the natural multiplication
map, the vertical maps are isomorphisms. In particular,
$$rk(\mu_{o,C^\prime})=rk(\mu_{o,C}^\prime).$$ In order to compute
the rank of $\mu_{o,C}^\prime$, we consider the following
commutative diagram

$$\xymatrix{
H^0(C,\o_C(1))\otimes H^0(C,\omega_C(-1))\ar[r]^-{\mu_{o,C}}\ar[d]&
H^0(C,\omega_{C})\ar[d]^G\\
H^0(C,\o_C(1))\otimes H^0(C,\omega_C(-1)(\phi^*(P)))
\ar[r]^-{\mu_{o,C}^\prime} & H^0(C,\omega_C(\phi^*(P)))}$$
\noindent
where the vertical maps are injections. Notice that, since we
supposed $n\geq 5$, $h^0(C,\o_C(1))=3$ and $g>n-2\geq 3$, the
sheaf $\o_C(1)$ is special. We deduce that $C$ is not
hyperelliptic and, chosen a basis of $H^0(C, \omega_C)$, the
associated map $C\to\mathbb P^{g-1}$ is an embedding. On the
contrary, the sheaf $\omega_C(\phi^*(P))$ does not define an
embedding on $C$. Choosing a basis of $H^0(C, \omega_C(\phi^*(P))$
and denoting by $\Phi:C\to\mathbb P^g$ the associated map, this
will be an embedding outside $\phi^*(P)$. If $P$ is a node of $C$
and $\phi^*(P)=p_1+p_2$, the image of $C$ to $\mathbb P^g$, with
respect to $\Phi$, will have a node at the image point $Q$ of
$p_1$ and $p_2$. If $P\in\g$ is a cusp, then $\Phi(C)$ will have a
cusp at the image point $Q$ of $\phi^{-1}(P)$. The hyperplanes of
$\mathbb P^g$ passing through $Q$ cut out on $C$ the canonical
linear series $|\omega_C|.$ Moreover, if we denote by
$B\subset\mathbb P^g$ the subspace which is the base locus of the
hyperplanes of $\mathbb P^g$ corresponding to
$Im(\mu_{o,C}^\prime)$, then $Q\notin B$. Indeed, $B$ intersects
the curve $C$ in the image of the base locus of
$|\o_C(1)|+|\omega_C(\phi^*(P))(-1)|:=\mathbb
P(Im(\mu^{\prime}_{0,C}))$, which coincides with the base locus of
$|\omega_C(\phi^*(P))(-1)|$, since $|\o_C(1)|$ is base point free.
Now, by \eqref{fattoprima},

\begin{equation*}
h^0(\omega_C(\phi^*(P))(-1))= 3+g-n=h^0(C, \omega_C(-1))+1.
\end{equation*}
Then $\phi^*(P)$ does not belong to the base locus of
$|\omega_C(\phi^*(P))(-1)|$, and so $$\dim(<Q,B>_{\mathbb
P^g})=\dim(B)+1.$$ Finally, we find that

\begin{eqnarray*}
rk(\bnm)=rk(G\bnm)
& \leq  & \dim(Im(G)\cap Im(\mu^\prime_{o,C}))\\
 &\leq &g+1-\dim(<B,Q>_{\mathbb P^g})-1\\
 &= &  g-1-\dim(B)\\
 &= & rk(\mu^\prime_{o,C})-1.
\end{eqnarray*}
\end{proof}

\begin{corollary} \label{semicon}
Let $\Sigma\subset\s$ be a non-empty irreducible component of the expected 
dimension of $\s$, with $n\geq 5$. Suppose that $\Sigma$ has the 
expected number of moduli and that the general element $\ge\in\Sigma$
corresponds to a g.l.n. plane curve $\g$ of geometric genus $g$
such that, if $C\to\g$ is the normalization of $\g$, then the map
$\bnm$ is surjective. Then, for every $k^\prime\leq k$ and
$d^\prime\leq d+k-k^\prime$, there is at least an irreducible
component $\Sigma^\prime\subset\Sigma_{k^\prime,d^\prime}^n$, such
that $\Sigma\subset\Sigma^\prime$, the general element
$[D]\in\Sigma^\prime$ corresponds to a g.l.n. plane curve $D$ of
geometric genus $g^\prime$ with normalization $D^\nu\to D$ and the
Brill-Noether map $\mu_{0,D^\nu}$ surjective. In particular,
$\Sigma^\prime$ has the expected number of moduli.
\end{corollary}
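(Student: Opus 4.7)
The plan is to induct on the number of elementary deformations required to pass from $(k,d)$ to $(k',d')$, where an elementary deformation is one of the following: smoothing a single node, smoothing a single cusp, or deforming a single cusp into a node. By the last part of section \ref{S-E}, every admissible pair $(k',d')$ is reachable from $(k,d)$ by a sequence of such elementary deformations, realized by families inside irreducible components of the Severi-Enriques varieties all containing $\Sigma$ in their closures. The base case $\Sigma'=\Sigma$ is the hypothesis, so I would carry the induction by assuming the result for some intermediate component $\Sigma''\subset\Sigma^{n}_{k'',d''}$ containing $\Sigma$ and proving it when one more elementary deformation is applied.

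In the inductive step, let $\g''$ be a general element of $\Sigma''$ and let $C''$ be its normalization. By induction, $\g''$ is g.l.n.\ and $\mu_{o,C''}$ is surjective. I would choose the singularity $P\in\g''$ to be changed next and, by section \ref{S-E}, produce a one-parameter family $\mathcal D\to B$ of plane curves of degree $n$ with special fibre $\g''$ and general fibre a plane curve $D$ in some irreducible component $\tilde\Sigma$ containing $\Sigma$, realizing the prescribed change at $P$ and preserving all other singularities of $\g''$. A local analysis of $\mathcal D$ at its singular loci shows that, after base change to the normalization of $B$ and normalization of the total space, the resulting flat family $\tilde{\mathcal D}\to\tilde B$ has general fibre $D^\nu$ and central fibre equal either to the partial normalization $\tilde C$ of $\g''$ resolving every singularity except $P$ (in the smoothing cases, where the total space is smooth at $P$) or to the full normalization $C''$ (in the cusp-to-node case, where the total space has a singularity along the section through $P$ that is resolved by the normalization on both fibres).

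To control the Brill-Noether map on the central fibre, in the two smoothing cases I would apply Lemma \ref{rank} to $\g''$ at $P$, obtaining $rk(\mu_{o,\tilde C})\geq rk(\mu_{o,C''})+1=g''+1=h^0(\tilde C,\omega_{\tilde C})$, hence $\mu_{o,\tilde C}$ is surjective. The hypothesis $g''>n-2$ of Lemma \ref{rank} is automatic here, since surjectivity of $\mu_{o,C''}$ forces $\rho(2,g'',n)\leq 0$ and hence $g''\geq(3n-6)/2>n-2$ for $n\geq 5$. In the cusp-to-node case $\tilde C=C''$, so nothing new is needed.

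To conclude, I would observe that on $\tilde{\mathcal D}\to\tilde B$ each of $h^0(\omega)$, $h^0(\o(1))$ and $h^0(\omega(-1))$ is constant: the first by constancy of the arithmetic genus, the second by upper semicontinuity together with the three coordinate functions of $\p$ (which force $h^0(\o(1))\geq 3$ everywhere), the third by Riemann-Roch on Gorenstein curves. The Brill-Noether map thus assembles into a morphism of vector bundles of constant source and target rank on $\tilde B$, its rank is lower semicontinuous, surjectivity at the central fibre propagates to $D^\nu$, and g.l.n.\ is inherited likewise. Openness of these two properties transfers them to the general element of $\tilde\Sigma$, completing the induction. The concluding assertion about the expected number of moduli then follows from Proposition \ref{bnm}, once one notes that $\Sigma'$ inherits the expected dimension from $\Sigma$ via section \ref{S-E}. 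I expect the main technical obstacle to be the case-by-case local analysis of $\mathcal D$ and its normalization that identifies $\tilde{\mathcal D}_0$ with $\tilde C$ or $C''$; once that identification is in hand, the combination of Lemma \ref{rank} and the lower semicontinuity of $rk$ does the rest.
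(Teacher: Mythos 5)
Your proposal follows essentially the same route as the paper: reduce to the three elementary deformations (cusp to node, smoothing a node, smoothing a cusp) via the connectivity statement of section \ref{S-E}, apply Lemma \ref{rank} in the two smoothing cases and plain semicontinuity in the cusp-to-node case, then conclude with Proposition \ref{bnm}. The paper's own proof is just a terser version of this, and your added observation that surjectivity of $\bnm$ forces $\rho\leq 0$ and hence $g>n-2$ correctly discharges the hypothesis of Lemma \ref{rank} that the paper leaves implicit.
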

\begin{proof}
Let $\g$ be the curve corresponding to the general element $\ge$
of $\Sigma\subset\s$. Since by hypothesis $\Sigma$ is smooth of the 
expected dimension at $\ge$, by section \ref{S-E},
for every $k^\prime\leq k$ and for every $d^\prime\leq d+k-k^\prime$ there 
exists an irreducible component $\Sigma^\prime$ of
$\Sigma_{k^\prime,d^\prime}^n$ containing $\Sigma$. 
In order to prove the statement, it is enough to show it under the hypotheses $k^\prime=k-1$
and $d^\prime=d+1$, $k=k^\prime$ and $d^\prime=d-1$ or $d=d^\prime$ and $k^\prime=k-1$.
If $k^\prime=k-1$ and $d^\prime=d+1$, then the statement follows 
by standard semiconinuity arguments. If $k=k^\prime$ and $d^\prime=d-1$ or $d=d^\prime$ and $k^\prime=k-1$, the statement follows by lemma \ref{rank} and by standard semicontinuity
arguments.
\end{proof}

The following lemma has been stated and proved by Sernesi in
\cite{ser}. Actually, Sernesi supposes that $\g$ has only nodes as
singularities. But, since his proof works for plane curves $\g$
with any type of singularities and, since we need it for curves
with nodes and cusps, we state the lemma in a more general form.

\begin{lemma}\label{sernesi}(\cite{ser}, lemma 2.3)
Let $\g$ be an irreducible and reduced plane curve of degree
$n\geq 5$ with any type of singularities. Denote by $C$ the
normalization of $\g$. Suppose that $h^0(C, \o_{C}(1))=3$ and the
Brill-Noether map
$$
\bnm:H^0(C,\o_{C}(1))\otimes H^0(C,\omega_{C}(-1))\to H^0(C,\omega_{C}),
$$
has maximal rank. Let $R$ be a general line and let $P_1$, $P_2$
and $P_3$ be three fixed points of $\g\cap R$. We denote by $\cp$
the partial normalization of $\g^\prime=\g\cup R$, obtained by
smoothing all the singular points, except $P_1$, $P_2$ and $P_3$.
Then $h^0(\cp, \o_{\cp}(1))=3$ and, denoting by $\omega_{\cp}$ the
dualizing sheaf of $\cp$, the multiplication map
$$
\mu_{o,C^\prime}:H^0(\cp,\o_{\cp}(1))\otimes H^0(\cp,
\omega_{\cp}(-1))\to H^0(\cp,\omega_{\cp}),
$$
has maximal rank.
\end{lemma}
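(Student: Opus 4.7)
The plan is to use partial normalization exact sequences to translate the statement into a multiplication statement on $C$ itself, and then to compare with $\mu_{o,C}$ via a snake-lemma argument.

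\emph{Step 1: reduction to $C$.} Consider the exact sequence on $\cp$
$$0\to\o_R(-2)\to\o_{\cp}(1)\to\o_C(1)\to 0,$$
whose left term is the ideal of $C\subset\cp$ twisted by $\o_{\cp}(1)$ (using $\o_{\cp}(1)|_R=\o_R(1)$ and $C\cap R=\{P_1,P_2,P_3\}$). Since $h^0(\o_R(-2))=0$, one has $H^0(\cp,\o_{\cp}(1))\cong H^0(C,\o_C(1))$, giving $h^0(\cp,\o_{\cp}(1))=3$ by hypothesis. Tensoring by the invertible sheaves $\omega_{\cp}(-1)$ and $\omega_{\cp}(-2)$, and using $\omega_{\cp}|_R=\omega_R(P_1+P_2+P_3)=\o_R(1)$ (so $\omega_{\cp}(-1)|_R=\o_R$ and $\omega_{\cp}(-2)|_R=\o_R(-1)$), one obtains
$$0\to\o_R(-3)\to\omega_{\cp}(-1)\to\omega_C(-1)(D)\to 0,\qquad 0\to\o_R(-2)\to\omega_{\cp}\to\omega_C(D)\to 0,$$
where $D:=P_1+P_2+P_3$. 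The leftmost terms have vanishing $H^0$, so $H^0(\cp,\omega_{\cp}(-1))\cong H^0(C,\omega_C(-1)(D))$ and $H^0(\cp,\omega_{\cp})\cong H^0(C,\omega_C(D))$, and under these identifications $\mu_{o,C^\prime}$ becomes the multiplication map on $C$
$$\mu^\prime\colon V\otimes H^0(C,\omega_C(-1)(D))\to H^0(C,\omega_C(D)),\qquad V:=H^0(C,\o_C(1)).$$

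\emph{Step 2: snake lemma and analysis of $\bar\mu$.} Set $A=H^0(\omega_C(-1))$, $A^\prime=H^0(\omega_C(-1)(D))$, $B=H^0(\omega_C)$, $B^\prime=H^0(\omega_C(D))$. The inclusions $A\hookrightarrow A^\prime$ and $B\hookrightarrow B^\prime$ make $\mu_{o,C}$ and $\mu^\prime$ into the vertical arrows of a commutative square; passing to quotients gives an induced map $\bar\mu\colon V\otimes(A^\prime/A)\to B^\prime/B$, and the snake lemma produces
$$0\to\ker\mu_{o,C}\to\ker\mu^\prime\to\ker\bar\mu\to\mathrm{coker}\,\mu_{o,C}\to\mathrm{coker}\,\mu^\prime\to\mathrm{coker}\,\bar\mu\to 0.$$
The residue exact sequences identify $A^\prime/A$ with the kernel of $\o_C(-1)|_D\to H^1(\omega_C(-1))\cong V^*$ sending $(r_1,r_2,r_3)\mapsto\sum_i r_i\,\mathrm{ev}_{P_i}$, and $B^\prime/B$ with the hyperplane $\{b_1+b_2+b_3=0\}$ in $\o_D\cong\mathbb{C}^3$. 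Since $P_1,P_2,P_3$ lie on $R$, the three functionals $\mathrm{ev}_{P_i}$ all annihilate the linear form $\ell_R\in V$ cutting out $R$, and thus span only a 2-dimensional subspace of $V^*$; hence $\dim(A^\prime/A)=1$. A direct coordinate computation (e.g.\ with $R=\{Z=0\}$ and $P_1=[1:0:0]$, $P_2=[0:1:0]$, $P_3=[1:1:0]$) then shows that $\bar\mu$ is surjective and $\ker\bar\mu$ is the 1-dimensional line spanned by $\ell_R\otimes[\alpha_0]$, where $\alpha_0\in A^\prime$ is any lift of the generator of $A^\prime/A$.

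\emph{Step 3: conclusion and the main obstacle.} When $\rho\leq 0$, $\mu_{o,C}$ is surjective by the maximality hypothesis, so $\mathrm{coker}\,\mu_{o,C}=0$; the snake sequence then forces $\mathrm{coker}\,\mu^\prime=0$ and $\dim\ker\mu^\prime=\dim\ker\mu_{o,C}+1=-\rho+1$, which is exactly the maximal-rank kernel dimension for $\mu^\prime$. When $\rho\geq 1$, $\mu_{o,C}$ is injective and proving the same for $\mu^\prime$ reduces, by the snake sequence, to showing that the connecting map $\ker\bar\mu\to\mathrm{coker}\,\mu_{o,C}$ is injective. Unwinding the snake, the image of $\ell_R\otimes[\alpha_0]$ is represented by $\ell_R\cdot\alpha_0\in B$ (a well-defined section of $\omega_C$, since the simple poles of $\alpha_0$ at $D$ are cancelled by the simple zeros of $\ell_R$ there), and the required statement is
$$\ell_R\cdot\alpha_0\notin\mathrm{Im}\,\mu_{o,C}.$$
This is the main obstacle of the proof. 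Note that $\ell_R\cdot\alpha_0$ automatically vanishes on the residual divisor $Q_1+\cdots+Q_{n-3}=\phi^*(R)-D$ on $C$, so any factorisation $\ell_R\cdot\alpha_0=v\,\beta$ with $v\in V$ and $\beta\in A$ forces either $v$ to be a scalar multiple of $\ell_R$ (which would give $\beta=\alpha_0\notin A$, a contradiction) or $\beta$ to vanish on many of the $Q_j$; the latter can be ruled out by choosing $R$ sufficiently general on $\p$ together with the injectivity (i.e.\ maximal rank) of $\mu_{o,C}$.
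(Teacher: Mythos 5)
You should first be aware that the paper offers no proof of this lemma: it is imported from Sernesi (\cite{ser}, Lemma 2.3) with only the remark that his argument for nodal $\g$ extends to arbitrary singularities, so there is no in-paper proof to compare against. Judged on its own, your Steps 1 and 2 set up the right framework, in the same style the paper uses elsewhere (the sequence \eqref{se} in Theorem \ref{gknnc}, the diagrams in Lemma \ref{rank}). The identifications $H^0(\cp,\o_{\cp}(1))\cong H^0(C,\o_C(1))$, $H^0(\cp,\omega_{\cp}(-1))\cong H^0(C,\omega_C(-1)(D))$ and $H^0(\cp,\omega_{\cp})\cong H^0(C,\omega_C(D))$ are correct, though surjectivity of these restriction maps does not follow from the vanishing of $H^0$ of the subsheaves alone: you need the dimension counts (e.g.\ both sides of the middle isomorphism have dimension $g-n+3$, using that $P_1,P_2,P_3$ are collinear so $h^0(\o_C(1)(-P_1-P_2-P_3))=1$). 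Your analysis of $\bar\mu$ is correct: it is surjective with one-dimensional kernel spanned by $\ell_R\otimes[\alpha_0]$. Consequently the surjective case $\rho\leq 0$ is completely proved, with the expected kernel dimension $-\rho+1$.

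The injective case $\rho\geq 1$ contains a genuine gap, precisely at the point you yourself label the main obstacle. The reduction to the claim $\ell_R\cdot\alpha_0\notin\mathrm{Im}(\bnm)$ is correct and is indeed equivalent to injectivity of $\mu_{o,C^\prime}$, but the argument you sketch only excludes factorizations $\ell_R\alpha_0=v\,\beta$ with $v\in H^0(C,\o_C(1))$ and $\beta\in H^0(C,\omega_C(-1))$, i.e.\ it only shows $\ell_R\alpha_0$ is not a \emph{decomposable} element of the image. The image of a multiplication map is the linear span of such products, $\sum_i v_i\beta_i$, so ruling out a single factorization says nothing about membership in $\mathrm{Im}(\bnm)$, which for $\rho\geq 1$ is a proper subspace of $H^0(C,\omega_C)$ of codimension $\rho$. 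What must be shown is that the section $\ell_R\alpha_0$, as $R$ and the triple $P_1,P_2,P_3$ vary, does not remain inside this fixed subspace; the appeal to "$R$ sufficiently general together with the injectivity of $\bnm$" is a statement of what is needed, not a proof of it. This half cannot be dropped: in the induction of Theorem \ref{moduli}, case \eqref{gln}, the lemma is applied repeatedly in ranges where $\rho>0$ and $H^0(C,\omega_C(-1))\neq 0$ (for instance the chain starting at $n=7$, $g=6$ passes through $(n,g)=(8,8)$ and $(9,10)$ with $\rho=2$ and $\rho=1$), so the injectivity statement with nontrivial $\mathrm{Im}(\bnm)$ is genuinely used.
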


\begin{theorem}\label{moduli}
Let $\s$ be the algebraic system of irreducible plane curves of degree $n\geq 4$
with $k$ cusps, $d$ nodes and geometric genus $g=\pa-k-d$. Suppose that:
\begin{equation}\label{genus}
n-2\leq g\,\,\, \textrm{equivalently}\,\,\, k+d\leq h^0(\p,
\o_{\p}(n-4))
\end{equation}
and
\begin{equation}\label{g3n}
k\leq 6+\left[\frac{n-8}{3}\right]\,\,\textrm{if}\,\, 3n-9\leq
g\,\, \textrm{and}\,\, n\geq 6,
\end{equation}
\begin{equation}\label{otherwise}
k\leq 6\,\,\, \textrm{ otherwise}.
\end{equation}
Then $\s$ has at least an irreducible component $\Sigma$ which is
not empty and such that, if $\g\subset\p$ is the curve corresponding  
to the general element of $\Sigma$ and $C$ is the normalization curve
of $\g$, then $h^0(C,\o_C(1))=3$ and the map $\bnm$
 has maximal rank. In particular, when $\rho\leq
0$, the algebraic system $\Sigma$ has the expected number of
moduli equal to $3g-3+\rho-k$.
\end{theorem}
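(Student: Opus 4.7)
By proposition \ref{bnm}, it suffices to prove the structural claim that $\s$ contains an irreducible component $\Sigma$ whose general element $\ge$ satisfies $h^0(C,\o_C(1))=3$ and has $\bnm$ of maximal rank, where $\phi:C\to\g$ is the normalization. (When $\rho\leq 0$ this is exactly surjectivity.) I argue by induction on $n$, splitting on $g$.

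For $g\geq 2n-5$, equivalently $k+d\leq h^0(\p,\o_{\p}(n-5))$, I invoke theorem \ref{gknnc} with $t=2$, and with $t=3$ in the subrange $g\geq 3n-9$; the cusp hypotheses there match the ones assumed here. The resulting component has a geometrically $2$-normal general element, so lemma \ref{ag} and corollary \ref{g2n} conclude.

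For $n-2\leq g<2n-5$, I proceed by induction on $n$, with base cases $n\leq 6$ absorbed in the previous paragraph. Write $g^\flat=g-(n-5)$. By the inductive hypothesis applied to $(n-1,k,d-3)$ there is an irreducible component $\Sigma^\flat\subset\Sigma_{k,d-3}^{n-1}$ whose general element $\Gamma_0$ has normalization $C_0\to\Gamma_0$ satisfying $h^0(C_0,\o_{C_0}(1))=3$ and $\mu_{o,C_0}$ of maximal rank. Pick a general line $R$ meeting $\Gamma_0$ transversally, fix three points $P_1,P_2,P_3$ of $\Gamma_0\cap R$, and form the partial normalization $\cp$ of $\Gamma_0\cup R$ smoothing every singularity except $P_1,P_2,P_3$. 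Lemma \ref{sernesi} ensures that $\cp$ satisfies $h^0(\cp,\o_{\cp}(1))=3$ and that the corresponding Brill-Noether map has maximal rank. By section \ref{S-E} (applicable since $k<3n$), $\Gamma_0\cup R$ is the special fiber of a $1$-parameter family of irreducible plane curves of degree $n$ with $d$ nodes and $k$ cusps, parametrized by a smooth base. Normalizing the total space and invoking semicontinuity of $h^0(C,\o_C(1))$ and of the rank of $\bnm$ transfers both properties to the general member of the family, hence to an ambient irreducible component of $\s$.

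The main technical point is verifying that the inductive predecessor $(n-1,k,d-3)$ actually satisfies the hypotheses of the theorem: one needs $g^\flat\geq n-3$, i.e., $g\geq 2n-8$, and the cusp bound at degree $n-1$. These hold automatically for $n\leq 6$ but may fail for $n\geq 7$ with $g$ close to $n-2$. Such residual configurations are handled by iterating the lemma-\ref{sernesi} construction over several consecutive degrees, or by substituting a transverse conic (or cusped cubic, as in the proof of theorem \ref{gknnc}) for the line $R$, so as to introduce more nodes per degree increment; in either case the cusp bound must be tracked through the construction. This case analysis is the principal obstacle; all remaining steps are direct invocations of lemma \ref{sernesi}, lemma \ref{ag}, corollary \ref{g2n}, section \ref{S-E}, and semicontinuity.
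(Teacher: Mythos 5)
Your skeleton coincides with the paper's: theorem \ref{gknnc} with $t=2$ (resp.\ $t=3$) followed by corollary \ref{g2n} when $g\geq 2n-5$, and an induction on the degree via lemma \ref{sernesi}, section \ref{S-E} and semicontinuity when $n-2\leq g\leq 2n-6$; your inductive step is exactly the paper's. But the bookkeeping is off: attaching a line $R$ to a curve of degree $n-1$ and smoothing three of the $n-1$ intersection points raises the genus by exactly $2$ (not by $n-5$) and the number of nodes by $n-4$ (not by $3$). Hence the quantity $a=2n-7-g$ is preserved along each chain of the induction, and the chains must be run separately for each $a\geq 0$, plus a separate chain for $g=2n-6$; each chain needs its own base case, either at $n=7$ or at the boundary $g=n-2$ (where $h^0(C,\omega_C(-1))=0$ makes injectivity of $\bnm$ free).

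The genuine gap is that these base cases are neither ``absorbed in the previous paragraph'' nor routine: they constitute the bulk of the paper's proof. For $(n,g)=(6,6)$ and $(7,7)$ one has $h^0(C,\omega_C(-1))=2$, the curves are not geometrically $2$-normal (their $k+d=4$, resp.\ $8$, singular points cannot impose independent conditions on curves of degree $n-5$), so maximal rank of $\bnm$ follows neither from corollary \ref{g2n} nor from a dimension count. The paper proves $\ker(\bnm)=0$ by the base point free pencil trick, which requires first constructing a g.l.n.\ septic of genus $7$ whose eight singular points have no seven on an irreducible conic, then a case analysis of the ninth base point of the pencil of cubics through those eight points (on the curve, off the curve, infinitely near a singular point, tangent to a branch), with blow-up computations of $h^0(C,\o_C(6)(-2\Delta))$ and its variants. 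Moreover the chain $g=2n-6$ cannot be started from $(6,6)$ when $k=5$ or $6$, since a sextic of genus $6$ has only four singular points; the paper must establish $(7,8)$ directly by smoothing a node of the $(7,7)$ curve and showing $\dim(\ker(\mu))=1$ through an explicit syzygy argument that uses the irreducibility of the cubics through seven of the eight points. None of this is an iteration of lemma \ref{sernesi} or a substitution of a conic for the line, so the proposal as written does not close the argument.
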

\begin{proof}
\textit{Suppose that \eqref{g3n} holds.} Then, by observing that
\begin{displaymath}
g\geq 3n-9\,\,\,\textrm{if and only if}\,\,\,k+d\leq
h^0(\p,\o_{\p}(n-6))
\end{displaymath}
and by using theorem \ref{gknnc} for $t=3$, we have that there
exists an irreducible component $\Sigma$ of $\s$ whose general
element is a geometrically $3$-normal plane curve $\g$.
By remark \ref{kimplies}, it follows that also the
linear systems cut out on $C$ by the conics and the lines are
complete. The statement follows from corollary \ref{g2n}.

 \textit{In order to prove the theorem
under the hypothesis \eqref{otherwise}, we consider the following
subcases:}
\begin{enumerate}
\item $2n-5\leq g\leq 3n-9$,\,\,\, \textrm{i.e.}\,\,\,
$h^0(\o_{\p}(n-6)) \leq k+d\leq h^0(\o_{\p}(n-5))$\,\,\,
\textrm{and}\,\,\, $n\geq 5$,\label{bg2n}\\
\item $n-2\leq g\leq 2n-7$\,\,\,\textrm{and}\,\,\, $n\geq 5$,
\label{gln}\\
\item $g= 2n-6$\,\,\,\textrm{and}\,\,\, $n\geq 4$.\label{casospeciale}
\end{enumerate}

\textit{Suppose that \eqref{bg2n} holds.} By theorem \ref{gknnc}
for $t=2$, we know that, under this hypothesis, there exists a
nonempty component $\Sigma\subset\s$, whose general element is
geometrically $2$-normal. We conclude as in the previous case, by
corollary \ref{g2n}.

\textit{Now, suppose that $g$ and $n$ verify \eqref{gln}. We shall
prove the theorem by induction on $n$ and $g$. Set $g=2n-7-a$,
with $a\geq 0$ fixed.} Suppose that the theorem is true for the
pair $(n,g)$, with $n\geq 7$. We shall prove the theorem for
$(n+1, g+2)$, observing that $g+2=2(n+1)-7-a$. 
Let $\g$ be
a g.l.n. irreducible plane curve of degree $n$ and genus
$g=2n-7-a$ with $k\leq 6$ cusps, $d$ nodes and no more
singularities. Let $C$ be the normalization of $\g$. Suppose that
the Brill-Noether map $\bnm$ has maximal rank. Let $R\subset\p$ be
a general line and let $P_1$, $P_2$ and $P_3$ be three fixed
points of $\g\cap R$. By section \ref{S-E}, since $k\leq 6<3n$,
one can smooth the singular points $P_1,\,P_2,\,P_3$ and 
preserve the other singularities of $\g\cup R\subset\p$,
obtaining a family of plane curves $\mathcal C\to\Delta$
whose general fibre is irreducible, has degree $n+1$ and genus $g+2$. 
We conclude by lemma \ref{sernesi} and by standard semicontinuity arguments.

\textit{Now we prove the first step of the induction for $n\geq 7$.
If $n=7$, we get $0\leq a\leq 2$. Let $a=0$, i.e. $g=2n-7-a=7$.}
Let $\g$ be a g.l.n. irreducible plane
curve of degree $n=7$, of genus $g=n=7$ with $k\leq 6$ cusps and
nodes as singularities, such that no seven singular points
of $\g$ lie on an irreducible conic. To prove that there exists
such a plane curve, notice that, by applying theorem 
\ref{gknnc} for $t=1$, we get that,
for any fixed $k\leq 6$, there exists a g.l.n.
irreducible sextic $D$ of genus four with $k$ cusps and
$d=6-k$ nodes. Let $R_1,\dots,R_6$ be the singular points of $D$.
Since the points $R_1,\dots,R_6$ of $D$ impose independent linear
conditions to the conics, however we choose five singular points
$R_{i_1},\dots,R_{i_5}$ of $D$, with
$I=(i_1,\dots,i_5)\subset(1,\dots,6)$, there exists only
one conic $C_I$, passing through these points. Let us set
$S=\bigcup_I C_I\cap D$ and let $R$ be a line intersecting $D$
transversally at six points out of $S$. By Bezout theorem, no
seven singular points of $\g^\prime=D\cup R$ belong to an
irreducible conic. Moreover, if $\tilde D$ is the normalization of
$D$, if $Q_1,\dots,Q_4$ are four fixed points of $D\cap R$ and
$D^\prime$ is the partial normalization of $\g^\prime$ obtained by
smoothing the singular points except $Q_1,\dots,Q_4$, then, by the
following exact sequence
\begin{equation}\label{firstpage}
0\to\mathcal O_{R}(1)(-Q_1-\cdots-Q_4)\to\mathcal
O_{D^\prime}(1)\to \mathcal O_{\tilde D}(1)\to 0
\end{equation}
we find that $h^0(D^\prime,O_{D^\prime}(1))=3$. By 
section 2.1, one can smooth the singularities
$Q_1,\dots,Q_4$ and preserve the other singularities of $D\cup R$, 
getting a family of irreducible septics
$\mathcal G\to\Delta$ whose general fibre $\g$
is a geometrically linearly normal irreducible septic 
with $k$ cusps and $8-k$ nodes such that no seven singular point of 
$\g$ belong to an irreducible conic. 
Let, now, $C$
be the normalization of $\g$ and 
let $\Delta\subset C$ be the adjoint divisor
of the normalization map $\phi:C\to\g$.
We shall prove that $\ker(\bnm)=0$. Since $\g$ is geometrically linearly normal,
we have that
$$
h^0(C,\omega_C(-1))=h^0(C,\o_C(3)(-\Delta)))=g-n+2=2.
$$
Then, by the \pencil, we find that
$$\ker(\bnm)=H^0(C,\omega_{C}^*(B)\otimes\o_C(2)),$$
where $B$ is the base locus of $|\omega_C(-1)=\o_C(3)(-\Delta)|$.
Let $\mathcal F$ be the pencil of plane cubics passing through the
eight double points $P_1,\dots,P_8$ of $\g$
and let $B_\mathcal F$ be the base locus of the pencil $\mathcal
F$. Let $\g_3$ be the general element of $\mathcal F$. Suppose
that $B_{\mathcal F}$ has dimension one. If $B_{\mathcal F}$
contains a line $l$, then, by Bezout theorem, at most three points
among $P_1,\dots,P_8$, say $P_1,\dots,P_3$ can lie on $l$ and the
other points have to be contained in the base locus of a pencil of
conics $\mathcal F^\prime$. Using again Bezout theorem, we find
that the curves of $\mathcal F^\prime$ are reducible and the
base locus of $\mathcal F^\prime$ contains a line $l^\prime$. But
also $l^\prime$ contains at most three points of $P_4,\dots,P_6$.
It follows that there is only one cubic through $P_1,\dots,P_8$.
This is not possible by construction. Suppose that $B_{\mathcal
F}$ contains an irreducible conic $\g_2$. By Bezout theorem, at
most seven points among $P_1,\dots,P_8$ may lie on $\g_2$. On the
other hand, since $\dim(\mathcal F)=1$, there are exactly seven
points of $P_1,\dots,P_8$, say $P_1,\dots,P_7$, on $\g_2$ and the
general cubic $\g_3$ of $\mathcal F$ is union of $\g_2$ and a line
passing through $P_8$. Since, by construction, no seven singular
points of $\g$ lie on an irreducible conic, also in this case we get a
contradiction. So the general element $\g_3$ of $\mathcal F$ is
irreducible. Using again Bezout theorem, we find that $\g_3$ is
smooth and $\mathcal F$ has only
one more base point $Q$. We consider the following cases:\\
a) $Q$ doesn't lie on $\g$;\\
b) $Q$ lies on $\g$, but $Q\neq P_1,\dots,P_8$;\\
c) $Q$ is infinitely near to one of the points $P_1,\dots,P_8$,
say $P_{\hat{i}}$, i.e. the cubics of $\mathcal F$ have at
$P_{\hat{i}}$ the same tangent line $l$, but $l$ is not contained
in
the tangent cone to $\g$ at $P_{\hat{i}}$;\\
d) $Q$ is like in the case c), but $l$ is contained in the tangent
cone to $\g$ at $P_{\hat{i}}$.\\
Suppose that the case a) or c) holds. Thus $B=0$ and
$$
\ker(\bnm)=H^0(C,\omega_{C}^*\otimes\o_C(2))=H^0(C,\o_{C}(-2)(\Delta)).
$$
By Riemann-Roch theorem,
$h^0(C,\o_{C}(-2)(\Delta))=h^0(C,\o_{C}(6)(-2\Delta))-4$. One sees that
$h^0(C,\o_{C}(6)(-2\Delta))=4$, by blowing-up the plane at
$P_1,\dots,P_8$ and by using some standard exact sequences. 
Suppose now that the case b) holds. Thus $B=Q$
and
$$\dim(\ker(\bnm))=h^0(C,\o_{C}(-2)(\Delta+Q))=h^0(C,\o_{C}(6)(-2\Delta-Q))-3.$$
Also in this case one sees that $h^0(C,\o_{C}(6)(-2\Delta-Q))=3$ by blowing-up  
at $P_1,\dots,P_8$ and $Q$ and by using standard exact sequences. 
Finally, we analyze the case d). Let $\Phi: S\to \p$ be the blow-up
of the plane at $P_1,\dots,P_8$ with exceptional divisors
$E_1,\dots,E_8$. Let $Q\in E_{\hat{i}}$ be the intersection point
of $E_{\hat{i}}$ and the strict transform $C_3$ of the general
cubic $\g_3$ of the pencil $\mathcal F$. We denote by $\tilde \Phi
:\tilde S\to S$ the blow-up of $S$ at $Q$ and by $\Psi:\tilde S\to
\p$ the composition map of the maps $\Phi$ and $\tilde \Phi$. We
still denote by $E_1,\dots,E_8$ their strict transforms on $\tilde
S$, by $C$ and $C_3$ the strict transforms of $\g$ and $\g_3$ and
by $E_Q$ the new exceptional divisor of $\tilde S$. In this case
we have that $\Psi^*(\g)=C+2\sum_iE_i+3E_Q$,
$\Psi^*(\g_3)=C_3+\sum_iE_i+2E_Q$. Moreover, the divisor
$\Delta$ is cut out on $C$ by $\sum_iE_i+E_Q$ and the base locus
$B$ of the linear series $|\omega_C(-1)|$ coincides with the
intersection point of $E_Q$ and $C$. So, we have that
$$
\dim(\ker(\bnm))=h^0(C,\o_C(-2)(\sum_iE_i+2E_Q))=h^0(C,\o_C(6)(-2\sum_iE_i-3E_Q))-3.
$$
Moreover, from the following exact sequence
$$
0\to\o_{\tilde S}(-1)\to\o_{\tilde
S}(6)(-2\sum_iE_i-3E_Q)\to\o_{C}(6)(-2\sum_iE_i-3E_Q)\to 0
$$
we find that
$H^0(C,\o_{C}(6)(-2\sum_iE_i-3E_Q))=H^0(\tilde S,\o_{\tilde
S}(6)(-2\sum_iE_i-3E_Q))$. In order to show that 
$h^0(\tilde S,\o_{\tilde
S}(6)(-2\sum_iE_i-3E_Q))=3$, we consider the 
following exact sequence
\begin{eqnarray}
0\to\o_{\tilde S}(3)(-\sum_iE_i-E_Q)&\to &\o_{\tilde
S}(6)(-2\sum_iE_i-3E_Q)\to \label{dernierm}\\
&\to &\o_{C_3}(6)(-2\sum_iE_i-3E_Q)\to 0\nonumber
\end{eqnarray}
By Riemann-Roch theorem, we have that
\begin{displaymath}
h^0(C_3,\o_{C_3}(6)(-2\sum_iE_i-3E_Q))=1\,\,\,\textrm{and}\,\,\,
h^1(C_3,\o_{C_3}(6)(-2\sum_iE_i-3E_Q)=0.
\end{displaymath}
Moreover, by Serre duality we have that
$$
H^1(\tilde S, \o_{\tilde S}(3)(-\sum_iE_i-E_Q)))= H^1(\tilde S,
\o_{\tilde S}(-6)(2\sum_iE_i+3E_Q))).
$$
From the exact sequence
\begin{equation}\label{lastequation}
0\to\o_{\tilde S }(-6)(+2\sum_iE_i+3E_Q))\to\o_{\tilde S
}(1)\to\o_C(1)\to 0
\end{equation}
by using that the map 
$H^0(\tilde S,\o_{\tilde S}(1))\to H^0(C,\o_C(1))$ 
is surjective and that \\
$h^1(\tilde S,\o_{\tilde S}(1))=0$, we find that $$H^1(\tilde S,\o_{\tilde S
}(-6)(+2\sum_iE_i+3E_Q)))=H^1(\tilde S,\o_{\tilde S
}(3)(-\sum_iE_i-E_Q)))=0.$$ Then, by
\eqref{dernierm}, 
$
h^0(\tilde S,\o_{\tilde S}(6)(-2\sum_iE_i-3E_Q))=
h^0(\tilde S,\o_{\tilde S}(3)(-\sum_iE_i-E_Q))+
h^0(C_3,\o_{C_3}(6)(-2\sum_iE_i-3E_Q))=3$
and $\ker(\bnm)=0$.
\textit{The first step of induction for $g=n=7$ and $k\leq 6$ is
proved.}

\textit{We complete the proof of the first step of the induction,
for $n$ and $g$ verifying \eqref{gln}.}
When $n=7$ and $1\leq a\leq 2$, the existence of a g.l.n. plane curve $\g$
follows from theorem \ref{gknnc}. Using the above
notation, $h^0(C,\omega_{C}(-1))=1$ if $a=1$ and
$h^0(C,\omega_{C}(-1))=0$ if $a=2$. In any case $\bnm$ is
injective. When $n\geq 8$ and $a\leq n-6$ the
theorem follows by induction from the case $n=7$. For $n\geq 8$
and $a=n-5$, we find that $g=n-2$, or, equivalently,
$k+d=h^0(\p,\o_{\p}(n-4))$. In theorem \ref{gknnc}, we
proved the existence of geometrically linearly normal plane curves
of degree $n\geq 8$ and genus $g=n-2$, with nodes and $k\leq 6 $
cusps. For every such plane curve $\g$, using the above notation,
the Brill-Noether map $\bnm$ is injective since
$h^0(C,\omega_{C}(-1))=0$. The cases $n=5$ and $n=6$
are similar.

\textit{Suppose now that $n$ and $g$ verify \eqref{casospeciale}}.
First of all we prove the theorem for $(n,g)=(4,2)$, $(5,4)$, $(6,6)$.
For $n=4$ and $g=2$, we find $n=g+2$ and we argue as in
the case $n\geq 8$ and $g=n-2$. Similarly, for $(n,g)=(5,4)$. For
$n=6$ and $g=6$ in theorem \ref{gknnc} we proved the existence of
geometrically linearly normal plane curves $\g$ with $k\leq 4$ 
cusps and nodes as
singularities. For every such a plane curve $\g$, denoting by $C$ its
normalization, we get that $h^0(C,\omega_{C}(-1))=2$, i.e. the
linear system $\mathcal F$ of conics passing through the four
singular points $P_1,\dots,P_4$ of $\g$ is a pencil which cuts out
on $C$ the complete linear series $|\omega_C(-1)|$.
We have two possibilities: either
the general element of this pencil is irreducible or it consists
of a line containing exactly three singular points $P_1,\,P_2,\,P_3$
of $\g$ and a line passing through $P_4$.
In any case the base locus of $\mathcal F$ intersects $\g$ only at  
$P_1,\dots,P_4$ and the linear series $|\omega_C(-1)|$ has no base
points. Then, by the \pencil\,, we find that
$\ker(\bnm)=H^0(C,\omega_C^*\otimes
\o(2))=H^0(C,\o_C(-1)(\Delta))$, where $\Delta\subset C$ is the
adjoint divisor of the normalization map $C\to\g$. By
Riemann-Roch theorem, we have that
$h^0(C,\o_C(-1)(\Delta))=h^0(C,\o_C(4)(-2\Delta))-3.$ 
By blowing-up at $P_1,\dots,P_4$, one can see that 
$h^0(C,\o_C(4)(-2\Delta))=3$, as we wanted.

Finally, we show the theorem under the hypothesis \eqref{casospeciale} 
for $n\geq 7$, by using induction on $n$. In order to prove the inductive step 
we may use lemma \ref{sernesi}, exactly as we did in the case \eqref{gln}.
We prove the first step of induction. If $n=7$ we have that $g=8$.
On pages \pageref{gln} and \pageref{firstpage} we proved the existence of geometrically
linearly normal plane curves $\g$ of degree $7$ and genus $7$ with
$k\leq 6$, such that, if $P_1,\dots,P_8$ are the singular points
of $\g$, then no seven points among $P_1,\dots,P_8$ lie on a
conic. In particular, we proved that, for every such a plane curve $\g$, 
the general element of the pencil of cubics passing through 
$P_1,\dots,P_8$ is irreducible and,
if $\phi:C\to\g$ is the normalization of
$\g$, then the Brill-Noether map $\bnm$ is injective. Let
$C^\prime$ be the partial normalization of $\g$ which we get by
smoothing all the singular points of $\g$ except a node, say
$P_8$. By using the
same notation and by arguing exactly as in the proof of lemma
\ref{rank}, we get the following commutative diagram

$$\xymatrix{
H^0(\cp,\o_{\cp}(1))\otimes H^0(\cp,\omega_{\cp}(-1))
\ar[r]^-{\mu_{o,C^\prime}}\ar[d]&
H^0(\cp,\omega_{\cp})\ar[d]\\
H^0(C,\o_C(1))\otimes H^0(C,\omega_C(-1)(\phi^*(P_8)))
\ar[r]^-{\mu_{o,C}^\prime}& H^0(C,\omega_C(\phi^*(P_8)))
}$$
where ${\mu}^\prime_{o,C}$ is the multiplication map and the
vertical maps are isomorphisms. We want to prove that the map
$\mu_{o,C^\prime}$ is surjective.  By the previous diagram it is
enough to prove that $\bnm^\prime$ is surjective. Since 
$h^0(C,\omega_C(\phi^*(P_8)))=8$ and 
$$h^0(C,\o_C(1))h^0(C,\omega_C(-1)(\phi^*(P_8)))=3(7-7+3)=9,$$ we have that 
$\dim(\ker(\mu_{o,C^\prime}))\geq 1$ and $\mu_{o,C^\prime}$ is surjective
if $\dim(\ker(\mu_{o,C^\prime}))=1$. By recalling that $\g$ is geometrically
linearly normal, we have that, if $Z$ is the scheme of the points 
$P_1,\dots,P_7$ and $\mathcal I_{Z|\p}$ is the ideal sheaf of $Z$ in $\p$, 
then in the following commutative diagram 
$$\xymatrix{
H^0(C,\o_C(1))\otimes H^0(C,\omega_C(-1)(\phi^*(P_8)))
\ar[r]^-{\mu_{o,C}^\prime}\ar[d]& H^0(C,\omega_C(\phi^*(P_8)))\ar[d]\\
H^0(\p,\o_{\p}(1))\otimes H^0(\p,\mathcal I_{Z|\p}(3))
\ar[r]^-{\mu}&
H^0(\p,\mathcal I_{Z|\p}(4))   
}$$
the vertical maps are isomorphisms. Hence, it is enough to prove that
the kernel of the multiplication map $\mu$ has dimension one. 
Let $\left\{f_0,f_1,f_2\right\}$
be a basis of the vector space $H^0(\p,\mathcal I_{Z|\p}(3))$. Since the general 
cubic passing through $P_1,\dots,P_8$ is irreducible, we may assume that $f_0,\,f_1$
and $f_2$ are irreducible. Suppose, by contradiction, that there exist at least 
two linearly independent vectors in the kernel of $\mu$. Then, there exist
sections $u_0,u_1,u_2$ and $v_0,v_1,v_2$ of $H^0(\p,\o_{\p}(1))$ such that
the sections $\sum_i u_i\otimes f_i$ and $\sum_i v_i\otimes f_i$ are linearly 
independent in $H^0(\p,\o_{\p}(1))\otimes H^0(\p,\mathcal I_{Z|\p}(3))$ and
\begin{equation}\label{sistfin}
\left\{
\begin{array}{ccc}
\sum_{i=0}^3 u_if_i&=&0\\
\sum_{i=0}^3 v_if_i&=&0.
\end{array}
\right. 
\end{equation}
We can look at \eqref{sistfin} as a linear system in the variables 
$f_0,\,f_1,\,f_2$. The space of solutions of \eqref{sistfin}
is generated by the vector $$(u_1v_2-u_2v_1,u_3v_0-u_0v_3,u_0v_1-u_1v_0).$$
In particular, if we set $q_i=(-1)^{1+i}u_iv_j-v_i u_j$, we find that
$f_j q_i=f_iq_j$, for every $i\neq j$. But this is not possible 
since $f_1,f_2$ and $f_3$ are irreducible. We deduce that $$\dim(\ker(\mu))=
\dim(\ker(\mu_{o,C^\prime}))=1$$ and $\mu_{o,C^\prime}$ is surjective. 
The existence of a plane septic of genus $8$ with $k\leq 6$ cusps
and nodes as singularities, with injective Brill-Neother map,
follows now by smoothing the node $P_8$ (in the sense of section \ref{S-E})
and by standard semicontinuity arguments.
\end{proof}

\begin{remark}\label{remarkmoduli}
Notice that the conditions which we found in theorem \ref{moduli}
in order that $\s$ has at least an irreducible component with the
expected number of moduli, are not sharp, even if we suppose
$\rho\leq 0$. To see this, notice that in remark
\ref{notsharpness} we proved the existence of an irreducible
component $\Sigma$ of $\Sigma^{12}_{9,0}$ whose general element
corresponds to a $3$-normal plane curve. By remark \ref{kimplies}
and corollary \ref{g2n}, we have that $\Sigma$ has the expected
number of moduli.
\end{remark}

\begin{theorem}\label{rho=1}
$\Sigma_{1,d}^n$ has the expected number of moduli, for every
$d\leq \pa-1$.
\end{theorem}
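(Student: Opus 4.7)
The cases $\rho = 3n - 2g - 6 \leq 0$ and $\rho \geq 2$ (for $k = 1$) are handled by Theorem \ref{moduli} and Corollary \ref{5.431}. For $k = 1$ and $\rho \geq 1$, the expected number of moduli equals $\min(\dim \m_g, \dim \m_g + \rho - 1) = \dim \m_g$, i.e.\ general moduli; so it remains to prove that $\Sigma^n_{1, d}$ has general moduli whenever $\rho = 1$ (which forces $n$ odd and $g = (3n - 7)/2$). The plan is to show, by induction on odd $n \geq 5$, that there exists $[\g] \in \Sigma^n_{1, d}$ whose normalization $\phi: C \to \g$ satisfies $h^0(C, \o_C(1)) = 3$ (geometric linear normality) and whose Brill--Noether map $\bnm$ is injective (the maximal-rank condition when $\rho = 1$ and $\g$ is g.l.n., since the source has dimension one less than the target).

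Such a $[\g]$ will suffice. Arguing as in the proof of Proposition \ref{bnm}, injectivity of $\bnm$ gives $H^1(C, \phi^* \Theta_\p) = 0$, so $\delta_C: H^0(\fn) \to H^1(\Theta_C)$ is surjective with image of dimension $3g - 3$. Moreover, the one-dimensional torsion space $H^0(\k)$ parametrises the deformation that smooths the cusp of $\g$ into a node while leaving the normalization $C$ unchanged (local model: $\phi_\epsilon: t \mapsto (t^2 + 2\epsilon, t^3 + 3\epsilon t)$ deforms $\phi_0: t \mapsto (t^2, t^3)$ to a parametrisation of a nodal image); therefore $\delta_C|_{H^0(\k)} = 0$, and $\delta_C(W) = \delta_C(H^0(\fn)) = 3g - 3 = \dim \m_g$, yielding general moduli.

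For the base case $n = 5$, $g = 4$, $d = 1$, a general curve $C$ of genus $4$ is canonically embedded in $\mathbb{P}^3$ as a smooth sextic, with $G^2_5(C) \cong C$ via $p \mapsto |K_C - p|$, and the associated morphism is the projection $\pi_p: C \to \p$ from $p$. Classical intersection theory on $C \subset \mathbb{P}^3$ (whose tangent developable has class $\nu = 2(d + g - 1) = 18$) shows that for $p$ in a non-empty locus of $C$, exactly one tangent line $T_qC$ with $q \neq p$ passes through $p$, so that $\pi_p(C)$ is a plane quintic with one cusp and one node; the injectivity of $\bnm$ and geometric linear normality are then checked directly from the canonical embedding.

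For the inductive step $n \to n + 2$ (which preserves $\rho = 1$ since the genus then jumps by $3$), let $Q$ be a general smooth conic meeting $\g$ transversally in $2n$ points, choose four intersection points $P_1, \ldots, P_4$, and by Section \ref{S-E} obtain a family of irreducible degree-$(n+2)$ plane curves having one cusp, $d + 2n - 4$ nodes, and genus $g + 3$, by smoothing $P_1, \ldots, P_4$ and preserving all other singularities of $\g \cup Q$. On the partial normalization $C'$ of $\g \cup Q$ that keeps only $P_1, \ldots, P_4$, the exact sequence
\[
0 \to \o_Q(1)(-P_1 - \cdots - P_4) \to \o_{C'}(1) \to \o_C(1) \to 0,
\]
together with $\deg \o_Q(1)(-P_1 - \cdots - P_4) = -2 < 0$, yields $h^0(C', \o_{C'}(1)) = 3$; a conic-version of Lemma \ref{sernesi}, proved by an analogous diagram chase comparing sections of $\omega_{C'}$ and $\omega_{C'}(-1)$ to those on $C$ and $Q \cong \p^1$, shows that $\mu_{o, C'}$ has maximal rank. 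Both properties then propagate to the normalization of the general fibre by semicontinuity. The main obstacle is the proof of the conic-version of Sernesi's Brill--Noether preservation lemma: unlike the line case with three marked points, the conic has $h^0(\o_Q(1)) = 3$ and requires four marked points, so the dimension counts and vanishing arguments in the diagram chase must be set up with corresponding care.
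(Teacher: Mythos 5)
Your reduction to the case $\rho=1$ and the choice of degeneration ($\g\cup C_2$ with four marked intersection points) agree with the paper, but the step on which your whole argument rests is false. You claim $\delta_C|_{H^0(C,\k)}=0$ on the grounds that the torsion section deforms the cusp into a node ``while leaving the normalization $C$ unchanged.'' That is only a local statement about the model $t\mapsto(t^2+2\epsilon,t^3+3\epsilon t)$. Globally, the Kodaira--Spencer class of the torsion generator supported at the ramification point $p=\phi^{-1}(P)$ is computed by comparing the local lift $(2,3t)$ of that generator to $\phi^*\Theta_{\p}$ near $p$ with the zero lift away from $p$: their difference on a punctured neighbourhood is $t^{-1}\partial/\partial t$, and by Serre duality this $1$-cocycle pairs with a quadratic differential $f(t)(dt)^2$ to give $f(0)$. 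Hence $\delta_C$ is \emph{injective}, not zero, on $H^0(C,\k)$ for every $g\geq 2$. (Its vanishing would also contradict the paper itself: since $W\oplus H^0(C,\k)=H^0(C,\fn)$, it would force $\dim\delta_C(W)=\dim\delta_C(H^0(C,\fn))=3g-3+\rho$ in the situation of proposition \ref{bnm}, violating the bound $3g-3+\rho-k$ of lemma \ref{lemmaubnm}.) What you actually need is that the line $\delta_C(H^0(C,\k))$ lies inside $\delta_C(W)$, and that is essentially equivalent to the statement being proved; the remark following proposition \ref{bnm} says explicitly that injectivity of $\bnm$ only gives $3g-4\leq\dim\Pi_\Sigma(\Sigma)\leq 3g-3$ when $k=1$, and your argument does not close that gap of one. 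A secondary gap is the ``conic version'' of lemma \ref{sernesi} needed for your inductive step, which you leave unproved and yourself flag as the main obstacle.

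The paper avoids the Brill--Noether map entirely in the inductive step. It works inside an analytic branch $\mathcal S$ through $[C_2\cup\g]$, stratifies it by the loci $\mathcal S^i$ of curves retaining $4-i$ of the four marked nodes, and shows $\dim\Pi_{|\mathcal S}(\mathcal S)\geq\dim\Pi_{|\mathcal S}(\mathcal S^0)+4$ because the images land in strictly increasing boundary strata of $\m_{(3n-1)/2}$. It then computes $\dim\Pi_{|\mathcal S}(\mathcal S^0)=\dim\m_{(3n-7)/2}+5$ by showing that the map forgetting the rational tail has $5$-dimensional fibres (four points on $\g^\prime$ plus the pencil of conics through them, distinct conics giving non-isomorphic four-pointed tails, and a general $C$ having no nontrivial automorphisms). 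Summing gives $3\cdot\frac{3(n+2)-7}{2}-3=\dim\m_{g+3}$. If you wish to salvage your route, you must prove directly that $H^0(C,\k)\subset T_{\ge}\Sigma+\mathrm{im}\bigl(H^0(C,\phi^*\Theta_{\p})\bigr)$, which is not easier than the theorem.
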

\begin{proof}
First of all, we recall that, by \cite{k2},
$\Sigma_{1,d}^n$ is irreducible for every
$d\leq \pa-1$. Moreover, from theorem \ref{moduli} and from
corollary \ref{5.431}, we know that $\Sigma_{1,d}^n$ is not empty and it has the
expected number of moduli if either $\rho\leq 0$ or $\rho\geq 2$. Next we
shall prove that, if $\rho=1$, then the algebraic system
$$\Sigma_{1,d}^n=\Sigma_{1,\frac{(n-3)^2}{2}-1}^n$$ has general
moduli. Equivalently, we will show that, if $\ge\in\Sigma_{1,d}^n$
is a general point and $g=\pa-1-d=\frac{3n-7}{2}$, then, on the
normalization curve $C$ of $\g$ there are only finitely many
linear series $g^2_n$ with at least a ramification point. Notice that, if $g=\pa-1-d=\frac{3n-7}{2}$, then $n$ is odd
and $n\geq 5$. We prove the statement by induction on $n$.
 
If $n=5$ then $g=4$. Let $C\subset\mathbb P^3$ be the canonical 
model of a general curve of genus four and let $2P+Q$, with $P\neq Q$
be a divisor in a $g^1_3$ on $C$. This divisor is cut out on $C$ by the 
tangent line to $C$ at $P$. The projection of $C$ from $Q$ is a plane quintic 
of genus four with a cusp. This proves 
that $\Sigma_{1,1}^5$ has general moduli. 

Now we suppose that the theorem is true for $n$ and we prove the
theorem for $n+2$. Let $\g\subset\p$ be the plane curve with a cusp
and $\frac{(n-3)^2}{2}-1$ nodes corresponding to a general point $\ge\in\Sigma^{n}_{1,\frac{(n-3)^2}{2}-1}$ and let $C_2$ be an irreducible
conic intersecting $\g$ transversally. By section \ref{S-E},
the point $[C_2\cup \g]$ belongs to $\Sigma^{n+2}_{1,\frac{(n+2-3)^2}{2}-1}$.
In particular, however we choose four points $P_1,\dots,\,P_4$
of intersection between $\g$ and $C_2$, there exists an analytic branch
$\mathcal S_{P_1,\dots,\,P_4}$ of $\Sigma^{n+2}_{1,\frac{(n-1)^2}{2}-1}$,
passing through $[C_2\cup \g]$ and whose general point corresponds to
an irreducible plane curve of degree $n+2$ with a cusp in a neighborhood
of the cusp of $\g$ and a node at a neighborhood of every node of $C_2\cup\g$
different from $P_1,\dots,\,P_4$. Moreover, $\mathcal S:=\mathcal S_{P_1,\dots,\,P_4}$ is
smooth at the point $[C_2\cup \g]$, (see \cite{tesi}, chapter 2). Let 
$$
\Pi:\Sigma^{n+2}_{1,\frac{(n-1)^2}{2}-1}\dashrightarrow\m_{\frac{3(n+2)-7}{2}}
$$
be the moduli map of $\Sigma^{n+2}_{1,\frac{(n-1)^2}{2}-1}$. In order to prove
that $\Pi$ is dominant it is sufficient to show that $\overline{\Pi(\mathcal S)}=\m_{\frac{3n-1}{2}}$.
By section \ref{S-E}, there exist an analytic open sets $\mathcal S^i\subset\Sigma^{n+2}_{1,\frac{(n-3)^2}{2}-1+2n-i}$,
with $i=1,\,2,\,3$, such that
$$
\mathcal S^0:=\mathcal S\cap (\mathbb P^5\times\Sigma^n_{1,\frac{(n-3)^2}{2}-1})
\subset\mathcal S^1\subset\mathcal S^2\subset \mathcal S^3\subset\mathcal S.
$$
Every $\mathcal S^i$, with $i=1,2,3$, has $\binom{4}{4-i}$ irreducible components,
passing through $[C_2\cup\g]$ and intersecting transversally at $[C_2\cup\g]$,
(see \cite{tesi}, chapter 2 or \cite{z2}). Moreover,
the general point of every irreducible component of $\mathcal S^i$, with $i=1,\,2,\,3$, corresponds to an irreducible plane curve $\g_i$ of degree $n+2$ with a cusp in a neighborhood of the cusp of $\g$, a 
node in a neighborhood of every node of $C_2\cup\g$ different from 
$P_1,\dots,\,P_4$ and $4-i$ nodes specializing to $4-i$ fixed points among $P_1,\dots,\,P_4$,
as $\g_i$ specializes to $C_2\cup\g$. Now, notice that the moduli map $\Pi$ is not defined
at the point $[C_2\cup\g]$, but, if $\mathcal S$ is sufficiently small, then the restriction of $\Pi$ to $\mathcal S$ extends to a regular function on $\mathcal S$.
More precisely, let $\mathcal C\to\Delta$ be any family of curves, parametrized by
a projective curve $\Delta\subset\mathcal S$, passing through the point $[C_2\cup\g]$ and 
whose general point corresponds to an irreducible plane curve of degree $n+2$
of genus $\frac{3n-1}{2}=\frac{3(n+2)-7}{2}$ with a cusp and nodes as singularities.
If we denote by $\mathcal C^\prime\to\Delta$ the family of curves obtained from $\mathcal C\to\Delta$ by normalizing the total space, we have that the general fibre of $\mathcal C^\prime\to\Delta$ is a smooth curve of genus $\frac{3n-1}{2}$, corresponding to
the normalization of the general fibre of $\mathcal C\to\Delta$, whereas the special fibre 
$\mathcal C^\prime_0$ is the partial normalization of $C_2\cup\g$, obtained by
normalizing all the singular points, except $P_1,\dots,\,P_4$. Then, the map $\Pi_{|_{\mathcal S}}$ is defined at $[C_2\cup\g]$ and it
associates to the point $[C_2\cup\g]$ the isomorphism class of $\mathcal C^\prime_0$.
Similarly, if $[\g_i]$ is a general point in one of the irreducible components of
$\mathcal S^i$, with $i=1,\,2,\,3$, then $\Pi_{|_{\mathcal S}}([\g_i])$ is the partial normalization of $\g_i$ obtained by smoothing all the singular points except for the $4-i$ nodes of $\g_i$ tending to $4-i$
fixed points among $P_1,\dots,P_4$ as $\g_i$ specializes to $C_2\cup\g$. 
It follows that, if we denote by $\m_{\frac{3n-1}{2}}^j$ the locus of 
$\m_{\frac{3n-1}{2}}$ parametrizing $j$-nodal curves, then $\Pi_{\mathcal S}(\mathcal S^{i})
\subseteq\m_{\frac{3n-1}{2}}^{4-i}$, for every $i=0,\dots,\,4$, and $\Pi_{\mathcal S}(\mathcal S^{i})
\varsubsetneq \Pi_{\mathcal S}(\mathcal S^{i+1})$. In particular, we find that
$$
\dim(\Pi_{|_{\mathcal S}}(\mathcal S))\geq \dim(\Pi_{|_{\mathcal S}}(\mathcal S^0))+4.
$$
In order to compute the dimension of $\Pi_{|_{\mathcal S}}(\mathcal S^0)$ we consider the 
rational map 
\begin{eqnarray*}
F:\Pi_{|_{\mathcal S}}(\mathcal S^0) & \dashrightarrow & \m_{\frac{3n-7}{2}}
\end{eqnarray*}
forgetting the rational tail. By the hypothesis that $\Sigma_{1,\frac{(n-3)^2}{2}-1}^n$
has general moduli and hence $F$ is dominant. Moreover, if $C$ is the normalization 
curve of $\g$, by the generality of $\ge$ in $\Sigma_{1,\frac{(n-3)^2}{2}-1}^n$, we may 
assume that $C$ is general in $\m_{\frac{3n-7}{2}}$. We want to show that $\dim(F^{-1}([C]))=5$. In order to see this, we recall that, by the hypothesis that $\Sigma_{1,\frac{(n-3)^2}{2}-1}^n$ has general moduli, on $C$ there exist only 
finitely many linear series of degree $n$ and dimension two, mapping $C$ to the plane as curve
with a cusp and nodes as singularities. Let $g^2_n$ be one of these linear series, let $\left\{s_0,\,s_1,\,s_2\right\}$ be a basis of $g^2_n$ and 
$\phi^\prime:C\to\g^\prime\subset\p$ the associated morphism.
If $Q_1,\dots,Q_4$ are four general points of $\g^\prime$,
then the linear system of conics through $Q_1,\dots,Q_4$ is a pencil $\mathcal F(Q_1,\dots,Q_4)$.
Let $C_2$ and $D_2$ be two general conics of $\mathcal F(Q_1,\dots,\,Q_4)$.
We claim that, if $\eta:\mathbb P^1\to C_2$ and $\beta:\mathbb P^1\to D_2$ are
isomorphisms between $\mathbb P^1$ and $C_2$ and $D_2$ respectively,
then the points $\eta^{-1}(Q_1),\dots,\eta^{-1}(Q_4)$ are not projectively equivalent 
to the points $\beta^{-1}(Q_1),\dots,\beta^{-1}(Q_4)$. In order to prove this, 
it is enough to prove that there are at least two conics in the 
pencil $\mathcal F(Q_1,\dots,Q_4)$ which verify the claim.
Let $D\subset\p$ be a conic. If we choose two sets of points $p_1,\dots,p_4$
and $q_1,\dots,q_4$ of $D$ not projectively equivalent on $D$, we may always find
projective automorphisms $A:\p\to\p$ and $A^\prime:\p\to\p$ such that $A(p_i)=Q_i$ and $A^\prime(q_i)=(Q_i)$, for every $i$. By construction, the conics $C_2=A(D)$ and $D_2=A^\prime(D)$
belong to the pencil $F(Q_1,\dots,Q_4)$ and verify the claim. 
This implies that
the partial normalizations $C^\prime$ and $D^\prime$ of $\g^\prime\cup C_2$ and $\g^\prime\cup D_2$,
obtained by smoothing all the singular points except $Q_1,\dots,Q_4$, are not isomorphic. 
Now, let $C_2^\prime$ be a general conic of $\mathcal F(Q_1,\dots,Q_4)$ and let $R_1,\dots,\,R_4$ be four general points  of $\g^\prime$, different from $Q_1,\dots,Q_4$.
If $D_2^\prime$ is a general conic of the pencil $\mathcal F(R_1,\dots,\,R_4)$, then 
the partial normalization $C^\prime$ and $D^\prime$ of $\g^\prime\cup C_2^\prime$
and $\g^\prime\cup D_2^\prime$ obtained, respectively, by smoothing all the singular points except $Q_1,\dots,Q_4$ and $R_1,\dots,R_4$, are not isomorphic. Indeed, since $C$
is a general curve of genus $\frac{3n-7}{2}\geq 7$, the only automorphism of $C$
is the identity. 
This proves that $\dim(F^{-1}([C]))=5$. In particular, we deduce that
$$\dim(\Pi_{|_{\mathcal S}}(\mathcal S^0))= 3\frac{3n-7}{2}-3+5
$$
and
$$
\dim(\Pi_{|_{\mathcal S}}(\mathcal S)\geq 3\frac{3n-7}{2}-3+9=3\frac{3(n+2)-7}{2}-3.
$$
\end{proof}
\begin{remark}
We expect that it is possible to prove that $\s$ has expected number of moduli 
for every $\rho$ also when $k=2$ or $k=3$. By corollary \ref{5.431}
and theorem \ref{moduli}, 
$\s$ is not empty, irreducible and it has expected
number of moduli for $\rho\leq 0$ and
$\rho\geq 2k$. In order to extend theorem \ref{rho=1}  to the case $k=2$ and
$k=3$ one needs to consider a finite number of cases. 
\end{remark} 

\subsection*{Acknowledgment}
The results of this paper are part of my PhD-thesis. I would like
to express my gratitude to my advisor Prof. C. Ciliberto who initiated
me into the subject of algebraic geometry and who provided me many
invaluable suggestions. I have also enjoyed and 
benefited from conversation with many people including F. Flamini,
E. Sernesi, L. Chiantini, L. Caporaso and G. Pareschi. Finally, I would like 
to thank the referee for useful remarks which allowed me to improve the finale version
of this paper. 

{}

\end{document}